\newtheorem{thm}{Theorem}[section]
\newtheorem{lem}[thm]{Lemma}
\newtheorem{defn}[thm]{Definition}
\newtheorem{cor}[thm]{Corollary}
\newtheorem{rem}[thm]{Remark}
\newtheorem{conj}[thm]{Conjecture}
\newcommand{\CC}{\mathcal{C}} 
\newcommand{\FF}{\mathcal{F}}
\newcommand{\F}{\mathbb{F}}
\begin{document}

\title{On maximal cliques of Cayley graphs over fields}
\author{Chi Hoi Yip}
\address{Department of Mathematics \\ University of British Columbia \\ 1984 Mathematics Road \\ Canada V6T 1Z2}
\email{kyleyip@math.ubc.ca}
\subjclass[2020]{05C69 (05C50,05E30)}
\keywords{Cayley graph, Paley graph, Peisert graph, maximal clique}
\date{\today}

\begin{abstract}
We describe a new class of maximal cliques, with a vector space structure,  of Cayley graphs defined on the additive group of a field. In particular, we show that in the cubic Paley graph with order $q^3$, the subfield with $q$ elements forms a maximal clique. Similar statements also hold for quadruple Paley graphs and Peisert graphs with quartic order.
\end{abstract}
\maketitle

\section{Introduction}


For an abelian group $G$ and a connection set $S \subset G$ with $S=-S$, the {\em Cayley graph} $\operatorname{Cay}(G;S)$ is
the graph whose vertices are elements of $G$, such that two vertices $g$ and $h$ are adjacent if and only if $g-h \in S$. The assumption that $S=-S$ guarantees the graph is undirected.

A {\em clique} in a graph $X$ is a subgraph of $X$ that is a complete graph.  A {\em maximum clique} is a clique with the maximum size, while a {\em maximal clique} is a clique where one cannot add another vertex to it and still have a clique. For a graph $X$, the {\em clique number} of $X$, denoted $\omega (X)$, is the size of a maximum clique of $X$. For a given graph $X$, it is often interesting to estimate its clique number and classify maximum cliques as well as maximal cliques. It is not surprising if there exists a maximal clique in $X$ which is not maximum, but in general, it is challenging to construct such examples explicitly if $X$ has a large order. A simple reason is that finding the clique number of a graph is extremely difficult; to be precise, NP-complete \cite{RK}. We refer to a recent paper by Godsil and Rooney \cite{GR} for a discussion on the hardness of the computation on the clique number of Cayley graphs. 

However, for a given Cayley graph, we might be able to deduce some information on its cliques based on its algebraic structure. Using tools from additive combinatorics and random graph theory, Green and Morris \cite{Green, GM} showed that the clique number of almost all Cayley graphs defined on a cyclic group $G$ is $O(\log |G|)$. This suggests that if a Cayley graph is exceptional in the sense that the clique number is significantly larger than the logarithm scale, we might expect the graph has some additional proprieties. We will focus on this type of exceptional Cayley graphs defined on the additive group of a field. Throughout the paper, we let $p$ be an odd prime, $q$ be a power of $p$, and $\F_q$ denote the finite field with $q$ elements.

Our first result in this paper is the following theorem regarding a new family of maximal cliques in certain Cayley graphs. It states that there is a maximal clique with a vector space structure under certain assumptions.

\begin{thm}\label{Cayleythm}
Let $F$ be a field with the additive group $F^+$, and let $S$ be a connection set such that $S \subset F \setminus \{0\}$, $-1 \in S$,  and $S$ is closed under multiplication. Assume that $K$ is a proper subfield of $F$, such that $[F:K]<\infty$ and $K$ forms a clique in the Cayley graph $X=\operatorname{Cay}(F^+;S)$. Then there is a subspace $V$ of $F$ over $K$, with $K \subset V$, such that $V$ forms a maximal clique in $X$. If we assume the axiom of choice, then the assumption that $[F:K]$ is finite can be dropped.
\end{thm}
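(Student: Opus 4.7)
The plan is to construct $V$ as a maximal element among $K$-subspaces of $F$ that contain $K$ and form a clique in $X$, and then to upgrade that subspace-maximality into graph-theoretic maximality. First I would record the following reduction: since $0 \in V$ for any $K$-subspace $V$, and since $v - w \in V$ whenever $v,w \in V$, a $K$-subspace $V$ of $F$ is a clique in $X$ if and only if $V \setminus \{0\} \subset S$. In particular the hypothesis that $K$ is a clique is equivalent to $K \setminus \{0\} \subset S$, the formulation I will need when invoking multiplicative closure of $S$.

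Next, let $\mathcal{C}$ be the family of $K$-subspaces $V$ with $K \subset V \subset F$ and $V \setminus \{0\} \subset S$, partially ordered by inclusion. It is nonempty because $K \in \mathcal{C}$. When $[F:K] < \infty$, any chain in $\mathcal{C}$ stabilizes by a dimension count, so a maximal element exists. In general, the union of any chain in $\mathcal{C}$ is again a $K$-subspace of $F$ that contains $K$ and lies in $S \cup \{0\}$, so Zorn's lemma furnishes a maximal element $V \in \mathcal{C}$.

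The main step, and the main obstacle, is to verify that such a maximal $V \in \mathcal{C}$ is actually a maximal clique in the graph $X$ (i.e.\ that one cannot extend it by a single vertex, not merely by a whole subspace). Suppose for contradiction that $w \in F \setminus V$ is adjacent to every vertex of $V$; then $w - v \in S$ for every $v \in V$, and taking $v=0$ gives $w \in S$. The natural candidate for a strictly larger element of $\mathcal{C}$ is the $K$-subspace $V' = V + Kw$. To verify $V' \in \mathcal{C}$ I would write an arbitrary nonzero element of $V'$ as $v + \lambda w$ with $v \in V$ and $\lambda \in K$, dispose of the case $\lambda = 0$ by noting $v \in V \setminus \{0\} \subset S$, and for $\lambda \neq 0$ use the factorization
\[
v + \lambda w \;=\; \lambda \cdot \bigl(w - (-\lambda^{-1} v)\bigr),
\]
which exhibits $v + \lambda w$ as the product of $\lambda \in K \setminus \{0\} \subset S$ with $w - (-\lambda^{-1} v) \in S$ (the latter because $-\lambda^{-1} v \in V$, so this difference is an edge condition already granted by adjacency of $w$ to $V$). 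Multiplicative closure of $S$ then gives $v + \lambda w \in S$, producing the contradiction. This factorization, which cleanly combines all three hypotheses on $S$ (containment of $-1$, multiplicative closure, and $K$ being a clique), is the delicate point; once it is in hand the rest of the argument is formal.
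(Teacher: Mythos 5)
Your proposal is correct and follows essentially the same route as the paper: Zorn's lemma (or a dimension count in the finite case) applied to the family of subspace-cliques containing $K$, combined with the extension step showing that a common neighbour $w$ of $V$ yields the larger subspace-clique $V + Kw$ via the factorization $v+\lambda w=\lambda\bigl(w+\lambda^{-1}v\bigr)$, which is exactly the paper's Lemma 2.3 in the equivalent formulation $V\setminus\{0\}\subset S$.
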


Two particular nice and well-studied classes of Cayley graphs are Paley graphs and Peisert graphs. It is well known that Paley graphs and Peisert graphs are strongly regular, symmetric, and self-complementary; see for example \cite{WP2}, \cite[Chapter 2]{thesis} and \cite[Chapters 2 and 3]{NM}. (Generalized) Paley graphs and Peisert graphs have many applications in combinatorics, number theory,  design theory, and coding theory; see for example \cite{GJ,KR,LJ}.

Paley graphs are defined on the additive group $\F_q^+$ of a finite field $\F_q$. Suppose $p$ is a prime, such that $q=p^s \equiv 1 \pmod{4}$. The {\em Paley graph} on $\F_q$, denoted $P_q$, is the graph whose vertices are the elements of $\F_q$, such that two vertices are adjacent if and only if the difference of the two vertices is a square in $\F_q$. Note that $P_q=\operatorname{Cay}(\F_q^+;(\F_q^*)^2)$. 

Similarly one can define generalized Paley graphs. They were first introduced by Cohen \cite{SC} in 1988, and reintroduced by Lim and Praeger \cite{LP} in 2009. Let $d>1$ be a positive integer. The {\em $d$-Paley graph} on $\F_q$, denoted $GP(q,d)$, is the graph whose vertices are the elements of $\F_q$, where two vertices are adjacent if and only if the difference of the two vertices is a $d$-th power of $x$ for some $x \in \F_q^*$. It is standard (see for example \cite[Section 4]{SC}) to further assume that $q \equiv 1 \pmod {2d}$. Note that $2$-Paley graphs are just the standard Paley graphs. $3$-Paley graphs are also called {\em cubic Paley graphs}, $4$-Paley graphs are also called {\em quadruple Paley graphs} \cite{WA}. Also note that generalized Paley graphs are in fact Cayley graphs. We have $GP(q,d)=\operatorname{Cay}(\F_q^+;(\F_q^*)^d)$, where $(\F_q^*)^d$ is the set of $d$-th powers in $\F_q^*$. 

In the literature \cite{BDR,SC,HP,Yip,Yip2}, the trivial upper bound on $\omega\big(GP(q,d)\big)$ is given by $\sqrt{q}$, and a simple proof can be found in \cite[Lemma 5.2]{Yip2}. We will refer to this trivial upper bound constantly in Section 3. When $q$ is a square, it is easy to show that $\omega(P_q)=\sqrt{q}$; moreover, Blokhuis \cite{AB} classified all maximum cliques in $P_q$ and showed that the only maximum clique containing $0,1$ is the subfield $\F_{\sqrt{q}}$. Later Sziklai \cite{PS} generalized Blokhuis's proof and extended this characterization of maximum cliques to certain generalized Paley graphs.

For a Paley graph with square order, explicit families of maximal cliques that are not maximum are known by the independent works of Baker et al. \cite{BEHW}, and Goryainov et al. \cite{GKSV}. 

\begin{thm}[\cite{BEHW}, \cite{GKSV}] \label{max2}
Let $q$ be an odd prime power. In the Paley graph with order $q^2$, there is a maximal clique of size $\frac{1}{2}(q+1)$ or $\frac{1}{2}(q+3)$, accordingly as $q \equiv 1\pmod 4$ or $q \equiv 3 \pmod 4$.
\end{thm}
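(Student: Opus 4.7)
My plan is to construct an explicit maximal clique $C$ by taking a large subset of the subfield $\F_q \subset \F_{q^2}$ and adjoining one or two carefully chosen points from $\F_{q^2}\setminus \F_q$, and then to establish maximality via a Weil-type character sum bound on a quartic polynomial. Throughout I write $\F_{q^2} = \F_q(\xi)$ with $\xi^2 = \delta \in \F_q^*$ a non-square; let $\chi$ denote the quadratic character of $\F_q$, write $u \mapsto u^q$ for the Galois conjugation of $\F_{q^2}/\F_q$, and set $N(u) := u^{q+1}$.

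The starting observation is that every $u \in \F_q^*$ is already a square in $\F_{q^2}^*$, since $u^{(q^2-1)/2} = (u^{q-1})^{(q+1)/2} = 1$; consequently the whole subfield $\F_q$ is a clique of size $q$ in $P_{q^2}$. For $z \in \F_{q^2}\setminus\F_q$, define
\[
A_z := \{a \in \F_q : N(z-a) \in (\F_q^*)^2\}.
\]
Writing $N(z-a) = a^2 - \operatorname{Tr}(z)\,a + N(z)$ with discriminant $(z-z^q)^2\neq 0$, the standard formula $\sum_{a\in\F_q}\chi(a^2+ba+c) = -1$ (valid whenever $b^2-4c\neq 0$) gives $|A_z|=(q-1)/2$. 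For $q \equiv 1 \pmod 4$ I then take $C := A_z \cup \{z\}$ for an arbitrary $z \notin \F_q$. For $q \equiv 3 \pmod 4$ I take $z = b\xi$ with $b \in \F_q^*$, so that $\operatorname{Tr}(z) = 0$ and $z^q = -z$; a direct check yields $A_{-z} = A_z$, and $z \sim -z$ because $N(z-(-z)) = -4b^2\delta$ is a square in $\F_q$ precisely when $-\delta$ is, i.e., iff $q \equiv 3 \pmod 4$. In both cases $C$ is a clique of the claimed size.

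For maximality, suppose $y \in \F_{q^2} \setminus C$ is adjacent to every element of $C$. If $y \in \F_q$ then $y \notin A_z$ means exactly $y \not\sim z$, a contradiction. Hence $y \in \F_{q^2}\setminus \F_q$, and $A_z \subseteq A_y$ together with $|A_y|=(q-1)/2$ forces $A_z = A_y$. This gives $\chi(N(z-a))\,\chi(N(y-a))=1$ for every $a\in\F_q$, so $\sum_{a \in \F_q}\chi(g(a)) = q$ where $g(a) := N(z-a)N(y-a)$ is a quartic without $\F_q$-roots. Weil's theorem bounds this sum by $3\sqrt q$ unless $g$ is a perfect square in $\overline{\F_q}[a]$; for $q > 9$ this forces $g$ to be a perfect square, and since its two factors are irreducible quadratics we must have $\{y, y^q\} = \{z, z^q\}$, i.e.\ $y = z^q$. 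For $q \equiv 1 \pmod 4$ the norm computation $N(z-z^q) = -c^2\delta$ (with $z-z^q = c\xi$) shows $z \not\sim z^q$, ruling this out; for $q \equiv 3 \pmod 4$ we instead have $z^q = -z \in C$, again a contradiction. The few sporadic cases $q \le 9$ are handled by direct computation.

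The main obstacle is the maximality step. The Weil bound is exactly what forces a prospective outside vertex $y$ back into the Galois orbit $\{z, z^q\}$, and then a delicate case split on $q \bmod 4$ rules out $y = z^q$: in one case it fails to be adjacent to $z$, and in the other it has already been swallowed into $C$. The same dichotomy, arising from whether $-\delta$ is a square in $\F_q$, is what creates the asymmetry between sizes $(q+1)/2$ and $(q+3)/2$ in the statement.
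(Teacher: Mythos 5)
The paper itself gives no proof of Theorem \ref{max2}: it is quoted from \cite{BEHW} and \cite{GKSV}, so your argument is necessarily a different route, and as far as I can check it is correct. The implicit norm criterion (for distinct $x,y\in\F_{q^2}$, $x\sim y$ in $P_{q^2}$ iff $N(x-y)=(x-y)^{q+1}$ is a square in $\F_q^*$, since $N(u)^{(q-1)/2}=u^{(q^2-1)/2}$) deserves one explicit line, and with it every step checks out: $|A_z|=(q-1)/2$ from the complete character sum over a quadratic with nonzero discriminant $(z-z^q)^2$; the sets $A_z\cup\{z\}$ for $q\equiv 1\pmod 4$ and $A_z\cup\{z,-z\}$ with $z=b\xi$ for $q\equiv 3\pmod 4$ (where $\chi(-\delta)=1$ gives $z\sim-z$) are cliques of the claimed sizes; and in the maximality step a common neighbour in $\F_q$ would have to lie in $A_z$, while a common neighbour $y\notin\F_q$ forces $A_y=A_z$, hence $\sum_{a\in\F_q}\chi\bigl(N(z-a)N(y-a)\bigr)=q$, and the Weil bound $3\sqrt{q}$ for the squarefree quartic forces $y=z^q$ once $q>9$, which is ruled out by $\chi\bigl(N(z-z^q)\bigr)=\chi(-c^2\delta)=-1$ when $q\equiv 1\pmod 4$ and by $z^q=-z\in C$ when $q\equiv 3\pmod 4$. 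Two housekeeping points: define $C$ explicitly in the $q\equiv 3\pmod 4$ case, and note that the deferred cases $q\in\{3,5,7,9\}$ (including $q=9$, where the Weil inequality $q\le 3\sqrt{q}$ is tight and gives nothing) are a genuine, if tiny, part of the proof — graphs on at most $81$ vertices, so the computation is routine, but it must be done either for your specific $C$ or for some other clique of the right size. Compared with the cited sources, your construction is more flexible — for $q\equiv 1\pmod 4$ every $z\notin\F_q$ yields such a maximal clique, so you get many of them at once — but it pays with the analytic input (Weil's theorem) and the sporadic small cases; \cite{BEHW} and \cite{GKSV} instead give completely explicit cliques and establish maximality by elementary counting respectively eigenfunction-type arguments that are uniform in $q$ and require no character-sum estimates.
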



In general, we have the following lower bounds based on subfield construction.

\begin{thm}[\cite{BDR}]\label{t4}
Let $d$ be a positive integer greater than $1$. Let $q \equiv 1 \pmod {2d}$ be a power of a prime $p$. If $k$ is an integer such that $d \mid \frac{q-1}{p^k-1}$, then the subfield $\F_{p^k}$ forms a clique in $GP(q,d)$. In particular, if $r$ is the largest integer such that $d \mid \frac{q-1}{p^r-1}$, then $\omega\big(GP(q,d)\big) \geq p^r$.
\end{thm}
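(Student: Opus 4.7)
The plan is to rewrite the clique condition purely in terms of the cyclic multiplicative group $\F_q^*$, and then read off the divisibility condition from elementary subgroup arithmetic.

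First I would observe that since $GP(q,d) = \operatorname{Cay}(\F_q^+;(\F_q^*)^d)$, a subset $C \subset \F_q$ forms a clique if and only if every nonzero difference $c_1 - c_2$ with $c_1, c_2 \in C$ lies in $(\F_q^*)^d$. When $C = \F_{p^k}$ is a subfield, the set of nonzero differences is simply $\F_{p^k}^*$, because $\F_{p^k}$ is closed under subtraction. So the clique condition reduces to the group-theoretic containment $\F_{p^k}^* \subseteq (\F_q^*)^d$. Implicit in the hypothesis that $(q-1)/(p^k-1)$ is an integer is the divisibility $(p^k-1) \mid (q-1)$, which is equivalent to $k \mid s$ (where $q = p^s$), so that $\F_{p^k}$ is genuinely a subfield of $\F_q$.

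Next I would exploit that $\F_q^*$ is cyclic of order $q-1$. In a cyclic group, for each divisor of the order there is a unique subgroup of that order, and one subgroup is contained in another if and only if the smaller order divides the larger one. Here $(\F_q^*)^d$ is the unique subgroup of order $(q-1)/d$, and $\F_{p^k}^*$ is the unique subgroup of order $p^k-1$. Therefore $\F_{p^k}^* \subseteq (\F_q^*)^d$ is equivalent to $(p^k-1) \mid (q-1)/d$, which after rearrangement is precisely $d \mid (q-1)/(p^k-1)$. This establishes the first assertion.

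For the ``in particular'' clause, I would simply take $k = r$ to be the largest integer with $d \mid (q-1)/(p^r-1)$; the argument above shows $\F_{p^r}$ is a subfield of $\F_q$ and forms a clique in $GP(q,d)$, hence $\omega(GP(q,d)) \geq |\F_{p^r}| = p^r$. There is no genuine obstacle in this proof; the only point that requires a moment's care is verifying that the hypothesis forces $\F_{p^k}$ to actually sit inside $\F_q$ as a subfield, but this is automatic from $(p^k - 1) \mid (q-1)$. The content of the theorem is really the observation that the clique condition for a subfield is equivalent to a clean divisibility relation between the orders of two subgroups of a cyclic group.
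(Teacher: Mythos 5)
Your proof is correct: the reduction of the clique condition to the containment $\F_{p^k}^* \subseteq (\F_q^*)^d$, together with the uniqueness of subgroups of each order in the cyclic group $\F_q^*$, is exactly the standard subfield argument behind this result, which the paper itself only cites from \cite{BDR} without reproducing. Your side remarks (that $d \mid q-1$ follows from $q \equiv 1 \pmod{2d}$, and that $(p^k-1)\mid(q-1)$ forces $\F_{p^k}$ to be a subfield of $\F_q$) correctly handle the only delicate points, so nothing is missing.
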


Based on Theorem \ref{t4}, it is natural to come up with the following conjecture. 

\begin{conj}\label{maxconj}
Let $d$ be a positive integer greater than $1$. Let $q \equiv 1 \pmod {2d}$ be a power of a prime $p$, and let $r$ be the largest integer such that $d \mid \frac{q-1}{p^r-1}$. Then the subfield $\F_{p^r}$ forms a maximal clique in $GP(q,d)$.
\end{conj}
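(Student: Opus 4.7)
The plan is to apply Theorem~\ref{Cayleythm} with $F = \F_q$, $K = \F_{p^r}$, and $S = (\F_q^*)^d$, and then to argue that the resulting maximal $K$-subspace is $K$ itself. The hypotheses are straightforward to verify: $S$ is the subgroup of $d$-th powers in $\F_q^*$, hence closed under multiplication and contained in $F \setminus \{0\}$; the congruence $q \equiv 1 \pmod{2d}$ makes $-1$ a $d$-th power, so $-1 \in S$; by Theorem~\ref{t4}, $K$ is a clique in $GP(q,d) = \operatorname{Cay}(\F_q^+; S)$; and $d \mid \frac{q-1}{p^r-1}$ together with $d > 1$ forces $p^r < q$, so $K$ is a proper subfield. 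Theorem~\ref{Cayleythm} then produces an $\F_{p^r}$-subspace $V \supseteq K$ of $\F_q$ that is a maximal clique in $GP(q,d)$; setting $m = \dim_{\F_{p^r}} V$, the conjecture reduces to showing $m = 1$.

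Write $q = p^s$, so $|V| = p^{rm}$. The trivial bound $\omega(GP(q,d)) \leq \sqrt{q}$ applied to $V$ gives $rm \leq s/2$. When $r > s/4$ this immediately forces $m = 1$ and settles the conjecture; the cubic Paley graph $GP(q^3, 3)$ is the headline example, since there the underlying field has size $p^{3\log_p q}$ and the relevant $r$ equals $\log_p q$, giving $r/s = 1/3$. In the complementary range $2 \leq m \leq \frac{s}{2r}$, one has an $\F_{p^r}$-subspace of $\F_q$ that strictly contains $\F_{p^r}$ and whose every nonzero element is a $d$-th power, and such $V$ must be ruled out.

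One natural way to finish would be to prove that any such $V$ is in fact a subfield $\F_{p^{rm}}$ of $\F_q$: the maximality of $r$ in the conjecture would then yield $d \nmid \frac{q-1}{p^{rm}-1}$, contradicting that every nonzero element of $V$ is a $d$-th power in $\F_q$. The main obstacle is therefore upgrading $V$ from an additive to a multiplicatively closed structure, since \emph{a priori} it carries only vector-space structure. Two strategies suggest themselves. The first is structural: $V \setminus \{0\}$ is a union of multiplicative cosets of $\F_{p^r}^*$ inside the index-$d$ subgroup of $d$-th powers, and one tries to show that the coset representatives must generate a subfield. The second is analytic: estimate character sums of the form $\sum_{a \in W} \chi(x - a)$ for $W$ an $\F_{p^r}$-subspace and $\chi$ a nontrivial multiplicative character of order dividing $d$, aiming to show that no translate $x + W$ with $x \notin W$ can lie entirely in the set of $d$-th powers once $|W|$ is close to the trivial bound. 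The quartic case $GP(q^4, 4)$, where $m = 2$ saturates the trivial bound at exactly $q^2$, is presumably where the most delicate analysis is required.
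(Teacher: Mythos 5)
The statement you are proving is stated in the paper only as a conjecture: the paper does not prove it in general, and only verifies it for cubic Paley graphs of cubic order (Theorem~\ref{max3}) and quadruple Paley graphs of quartic order (Theorem~\ref{max4}). Your reduction is exactly the paper's: applying Theorem~\ref{Cayleythm} with $K=\F_{p^r}$ and $S=(\F_q^*)^d$ and then invoking the trivial bound $\omega(GP(q,d))\le\sqrt q$ is precisely Corollary~\ref{corGP}/Corollary~\ref{corrr}, and your observation that $r>s/4$ forces $\dim_K V=1$ recovers the proof of Theorem~\ref{max3}. Up to that point your verification of the hypotheses of Theorem~\ref{Cayleythm} (including $-1\in S$ from $q\equiv 1\pmod{2d}$, and properness of $K$) is correct.

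The gap is everything after that reduction. In the range $2\le \dim_K V\le s/(2r)$ you only propose strategies; neither is carried out, and neither is known to work. Your first strategy assumes that a subspace clique $V\supsetneq\F_{p^r}$ can be upgraded to a subfield, but nothing in the setup forces $V$ to be multiplicatively closed --- indeed Theorem~\ref{maxP*} shows that in the closely related Peisert setting the natural candidate for a larger clique is $\F_{p^r}\oplus h\F_{p^r}$, which is a subspace and not a subfield, so this upgrade would itself be a substantial new theorem. Your second strategy (character sums over a translate $x+W$) runs into the standard obstruction that Weil-type estimates cannot beat the $\sqrt q$ threshold, which is exactly the regime at issue. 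Even in the quartic case $q=p^{4r}$, where only $\dim_K V=2$ must be excluded, the paper needs genuinely new input: Theorem~\ref{t7} for $p\equiv 1\pmod 4$, and Lemma~\ref{cor44} (via Lucas's theorem applied to the base-$p$ digits of $(q-1)/4$ together with Theorem~\ref{t5}) for $p\equiv 3\pmod 4$, to show the clique number is strictly below $\sqrt q$. Your proposal supplies no substitute for these arguments, so it proves neither the general conjecture nor the quartic case; what it does prove is only the cubic-order case already covered by Theorem~\ref{max3}.
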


Note that when $q=p^{2r}$ and $d \mid (p^r+1)$, Conjecture \ref{maxconj} holds trivially: the trivial upper bound on the clique number is $\sqrt{q}=p^r$, and Theorem \ref{t4} implies that $\F_{\sqrt{q}}$ forms a maximal clique.

We will apply Theorem \ref{Cayleythm} to generalized Paley graphs and verify that Conjecture \ref{maxconj} is true for certain cubic Paley graphs and quadruple Paley graphs. In particular, for cubic Paley graphs (with cubic order) and quadruple Paley graphs (with quartic order), we manage to construct explicit maximal cliques. 

\begin{thm}\label{max3}
Let $r$ be a positive integer. If $p$ is a prime such that $q=p^{3r} \equiv 1 \pmod 6$, then $\F_{p^r}$ is a maximal clique in the cubic Paley graph $GP(q,3)$.
\end{thm}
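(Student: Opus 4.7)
The plan is to apply Theorem \ref{Cayleythm} with $F = \F_q$, $K = \F_{p^r}$, and connection set $S = (\F_q^*)^3$, so that $\operatorname{Cay}(F^+; S) = GP(q,3)$. Three of the required properties of $S$ are immediate: $S \subset F \setminus \{0\}$, $-1 = (-1)^3 \in S$, and $S$ is closed under multiplication since a product of cubes is a cube. The field $K$ is a proper subfield of $F$ with $[F:K] = 3 < \infty$. The only nontrivial hypothesis is that $K$ forms a clique in $GP(q,3)$; by Theorem \ref{t4}, this reduces to checking that $3 \mid \frac{q-1}{p^r-1} = p^{2r} + p^r + 1$. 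The assumption $q = p^{3r} \equiv 1 \pmod 6$ forces $p \neq 3$, and a short case analysis on $p \bmod 3$ and the parity of $r$ shows $p^r \equiv 1 \pmod 3$ in every allowed case, so $p^{2r} + p^r + 1 \equiv 3 \equiv 0 \pmod 3$.

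Once the hypotheses are in place, Theorem \ref{Cayleythm} produces a $K$-subspace $V \subseteq F$ with $K \subseteq V$ that is a maximal clique in $GP(q,3)$. Since $[F:K] = 3$, the dimension $\dim_K V$ lies in $\{1, 2, 3\}$, and it suffices to rule out $\dim_K V \geq 2$. This is immediate from the trivial upper bound $\omega(GP(q,3)) \leq \sqrt{q} = p^{3r/2}$: any $K$-subspace of dimension at least $2$ has at least $p^{2r}$ elements, and $p^{2r} > p^{3r/2}$ for every $r \geq 1$. Hence $\dim_K V = 1$, i.e., $V = K = \F_{p^r}$, which is therefore a maximal clique.

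I expect no serious obstacle. The only step requiring any care is the divisibility check $3 \mid p^{2r} + p^r + 1$; everything else is a direct invocation of Theorem \ref{Cayleythm} followed by a dimension count against the trivial bound $\sqrt{q}$. Notably, the argument never needs to exhibit or rule out specific elements of $F \setminus K$ as candidates for $V$ --- the clique-size bound alone pins $\dim_K V$ down to $1$, forcing $V$ to be $K$ itself.
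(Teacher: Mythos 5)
Your proposal is correct and is essentially the paper's own argument: the paper simply packages the application of Theorem \ref{Cayleythm} and the trivial bound $\omega\big(GP(q,3)\big)\leq\sqrt{q}$ into Corollaries \ref{corGP} and \ref{corrr}, and verifies the same divisibility $3\mid 1+p^r+p^{2r}$ via $p^r\equiv 1\pmod 3$ to invoke Theorem \ref{t4}. Your direct invocation of Theorem \ref{Cayleythm} followed by the dimension count $p^{2r}>p^{3r/2}$ is the same reasoning, just without the intermediate corollaries.
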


\begin{thm}\label{max4}
Let $r$ be a positive integer. If $p$ is an odd prime and $q=p^{4r}$, then $\F_{p^r}$ is a maximal clique in the quadruple Paley graph $GP(q,4)$.
\end{thm}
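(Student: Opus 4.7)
The plan is to apply Theorem~\ref{Cayleythm} with $F = \F_q$, $K = \F_{p^r}$, and connection set $S = (\F_q^*)^4$, and then force the resulting $\F_{p^r}$-subspace $V$ to coincide with $\F_{p^r}$ itself. First I would verify the hypotheses of Theorem~\ref{Cayleythm}: closure of $S$ under multiplication is immediate, and $-1 \in S$ because $q = p^{4r} \equiv 1 \pmod 8$ (every odd prime satisfies $p^2 \equiv 1 \pmod 8$) guarantees that $(q-1)/2$ is a multiple of $4$. To see that $\F_{p^r}$ is a clique, Theorem~\ref{t4} requires $4 \mid (q-1)/(p^r-1) = 1 + p^r + p^{2r} + p^{3r}$, which is a short check modulo $4$ splitting on $p \bmod 4$ and the parity of $r$. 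Theorem~\ref{Cayleythm} then yields a maximal clique $V$ of $GP(q,4)$ that is an $\F_{p^r}$-subspace of $\F_q$ containing $\F_{p^r}$.

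Since $[\F_q:\F_{p^r}] = 4$, we have $\dim_{\F_{p^r}} V \in \{1,2,3,4\}$. The case $\dim V = 4$ is excluded because $V = \F_q$ cannot be a clique ($GP(q,4)$ is not complete), and $\dim V = 3$ is excluded by the trivial bound $\omega(GP(q,4)) \leq \sqrt q = p^{2r}$ recalled in the introduction, whereas $|V| = p^{3r}$ would exceed it. The main obstacle is to rule out $\dim V = 2$, where $|V| = p^{2r} = \sqrt q$ exactly matches the trivial bound.

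My idea here is to exploit the nesting $(\F_q^*)^4 \subset (\F_q^*)^2$, so that any clique of $GP(q,4)$ is automatically a clique of the Paley graph $P_q$. If $\dim V = 2$, then $V$ is a clique of size $\sqrt{q}$ in $P_q$, hence a maximum clique. Blokhuis's theorem \cite{AB} classifies the maximum cliques of $P_q$ for $q$ a square as the affine images $\lambda\F_{\sqrt q} + \mu$ of $\F_{\sqrt q} = \F_{p^{2r}}$; since $V$ is an $\F_{p^r}$-subspace containing both $0$ and $1$, a short calculation forces $\mu \in \lambda \F_{p^{2r}}$ and $\lambda \in \F_{p^{2r}}^*$, so $V = \F_{p^{2r}}$. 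But $\F_{p^{2r}}$ is not a clique in $GP(q,4)$: by Theorem~\ref{t4} this would require $4 \mid p^{2r}+1$, whereas $p^{2r} \equiv 1 \pmod 8$ gives $p^{2r}+1 \equiv 2 \pmod 4$. The resulting contradiction forces $\dim V = 1$, so $V = \F_{p^r}$.

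The heart of the argument, and the step I expect to be trickiest to write crisply, is this dimension $2$ case; the saving insight is that passing to the ambient Paley graph $P_q$ and invoking Blokhuis's classification sidesteps the need for delicate character-sum bounds over the subfield $\F_{p^r}$.
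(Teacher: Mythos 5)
Your proposal is correct, but it diverges from the paper at the decisive step. Both arguments begin identically: verify $-1\in(\F_q^*)^4$ and $4\mid 1+p^r+p^{2r}+p^{3r}$, invoke Theorem~\ref{Cayleythm} (as packaged in Corollary~\ref{cor4}) to get a maximal clique $V$ that is an $\F_{p^r}$-subspace containing $\F_{p^r}$, and observe that everything reduces to excluding $\dim_{\F_{p^r}}V=2$, i.e.\ to excluding a clique of size $\sqrt q$ through $0$ and $1$. The paper excludes it by showing the trivial bound $\sqrt q$ on $\omega\big(GP(q,4)\big)$ is never attained, using Theorem~\ref{t7} when $p\equiv 1\pmod 4$ and, when $p\equiv 3\pmod 4$, the Lucas-theorem computation of Lemma~\ref{cor44} built on Theorem~\ref{t5}. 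You instead pass to the ambient Paley graph $P_q$: a $\sqrt q$-clique of $GP(q,4)$ containing $0,1$ is a maximum clique of $P_q$, hence equals $\F_{p^{2r}}$ by Blokhuis \cite{AB}, and $\F_{p^{2r}}$ cannot be a clique of $GP(q,4)$ since $4\nmid p^{2r}+1$. This is a genuinely different and shorter route: it trades the recent polynomial-method bounds of \cite{Yip2} for the classical Blokhuis classification, needs no case split on $p\bmod 4$, and (pushed slightly further, using the full affine form $a\F_{\sqrt q}+b$ of Blokhuis's classification and a coset-ratio argument) would even reprove that $\omega\big(GP(p^{4r},4)\big)<\sqrt q$. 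What the paper's route buys in exchange is exactly that quantitative strengthening stated explicitly and with effective constants (Lemma~\ref{cor44} gives $\le\sqrt q-1$, Theorem~\ref{t7} gives roughly $\sqrt{q}/2$), obtained by methods that do not rely on the maximum-clique classification.

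One small imprecision to fix when writing this up: Theorem~\ref{t4} is stated only as a sufficient condition for a subfield to be a clique, so you cannot literally cite it for ``$\F_{p^{2r}}$ a clique would require $4\mid p^{2r}+1$.'' The converse you need is elementary and should be stated: the difference set of $\F_{p^{2r}}$ is $\F_{p^{2r}}^*$, which is the cyclic subgroup of $\F_q^*$ of order $p^{2r}-1$, and it lies in $(\F_q^*)^4$ if and only if $4\mid\frac{q-1}{p^{2r}-1}=p^{2r}+1$; since $p^{2r}\equiv 1\pmod 8$, this fails. With that line added, your argument is complete.
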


By Theorem \ref{t4} and Theorem \ref{max3}, we can deduce that: if $p^r \equiv 5 \pmod 6$, then in the cubic Paley graph $GP(p^{6r},3)$, $\F_{p^{2r}}$ forms a maximal clique, while the clique number of the graph is $p^{3r}$ and one maximum clique is given by $\F_{p^{3r}}$. This means we have explicitly constructed a maximal clique in a cubic Paley graph, which is not maximum. This phenomenon shares some similarity with Theorem \ref{max2}.

We will introduce Peisert graphs formally in Section 4. Peisert graphs are Cayley graphs defined on the additive group of a finite field with square order, and Peisert graphs are similar to Paley graphs in many aspects (see the discussion on \cite{KP}). However, little is known about the cliques of Peisert graphs. The next result provides a connection between maximal cliques and the clique number of a Peisert graph with quartic order.

\begin{thm}\label{maxP*}
Let $r$ be a positive integer, $p$ be a prime such that $p \equiv 3 \pmod 4$, and  $q=p^{4r}$.  If $\F_{p^r}$ is not a maximal clique in the Peisert graph $P^*_q$, then $\omega(P^*_q)= \sqrt{q}$; moreover, there exists $h \in \F_q \setminus \F_{p^r}$, such that $\F_{p^r} \oplus h\F_{p^r}$ forms a maximum clique, and $\{1,h,g^2,g^2h\}$ forms a basis of $\F_q$ over $\F_{p^r}$ for any primitive root $g$ in $\F_q$.
\end{thm}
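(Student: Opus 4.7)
The strategy is to imitate Theorem \ref{Cayleythm} for the Peisert connection set $M = \{g^j : j \equiv 0, 1 \pmod 4\}$. Although $M$ itself is not multiplicatively closed, it is stabilized by the subgroup $\langle g^4\rangle$, and multiplication by $g^2$ sends $M$ onto $\F_q^* \setminus M$, hence swaps $P^*_q$ with its complement. I would first verify that $\F_{p^r}^* \subseteq \langle g^4\rangle$: writing $\F_{p^r}^* = \langle g^m\rangle$ with $m = (q-1)/(p^r-1) = p^{3r}+p^{2r}+p^r+1$, the hypothesis $p \equiv 3 \pmod 4$ gives $p^k \equiv (-1)^k \pmod 4$, whence $m \equiv 2(-1)^r + 2 \equiv 0 \pmod 4$ regardless of the parity of $r$. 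In particular $\F_{p^r}$ is a clique in $P^*_q$, and multiplication by any element of $\F_{p^r}^*$ is a graph automorphism.

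Next, assume $\F_{p^r}$ is not a maximal clique, and fix $h \in \F_q \setminus \F_{p^r}$ with $h - x \in M$ for every $x \in \F_{p^r}$. I claim $C := \F_{p^r} \oplus h\F_{p^r}$ is a clique: a typical nonzero difference in $C$ reads $(a-a') + h(b-b')$, which either sits in $\F_{p^r}^* \subseteq M$ when $b=b'$, or factors as $(b-b')\bigl(h + (a-a')/(b-b')\bigr)$, with the parenthesized factor in $M$ by hypothesis and the scalar in $\F_{p^r}^* \subseteq \langle g^4\rangle$ preserving $M$. Combined with the standard upper bound $\omega(P^*_q) \leq \sqrt{q}$, this yields $\omega(P^*_q) = \sqrt{q} = p^{2r}$ and identifies $C$ as a maximum clique.

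For the basis statement, I would exploit that $g^2 C$ is an independent set in $P^*_q$: any nonzero difference in $g^2 C$ is $g^2$ times a nonzero difference in $C$, hence lies in $g^2 M = \F_q^* \setminus M$. If some nonzero $v \in C \cap g^2 C$, then $\{0, v\}$ would be simultaneously an edge (via $C$) and a non-edge (via $g^2 C$), a contradiction; so $C \cap g^2 C = \{0\}$. Because $C$ and $g^2 C$ are both $2$-dimensional $\F_{p^r}$-subspaces of the $4$-dimensional $\F_q$, a dimension count forces $\F_q = C \oplus g^2 C$, and concatenating the bases $\{1, h\}$ of $C$ and $\{g^2, g^2 h\}$ of $g^2 C$ yields the claimed basis.

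The delicate point is the congruence bookkeeping that underwrites both $\F_{p^r}^* \subseteq \langle g^4\rangle$ and $g^2 M = \F_q^* \setminus M$; once these symmetries are pinned down, the clique/independent-set dichotomy makes both the maximum-clique identification and the linear independence of $\{1, h, g^2, g^2 h\}$ essentially automatic.
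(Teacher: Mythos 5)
Your proposal is correct and follows essentially the same route as the paper: the same factorization argument shows $\F_{p^r}\oplus h\F_{p^r}$ is a clique (your explicit check that $4$ divides $(q-1)/(p^r-1)$ supplies the detail the paper imports from Theorem \ref{KP}), and the trivial bound of Lemma \ref{triv} then identifies it as a maximum clique. The only minor difference is the basis step: the paper invokes the equality case of Lemma \ref{triv} to get $\F_q=C\oplus g^2C$, whereas you rederive the same decomposition by noting that $g^2C$ is an independent set meeting $C$ only in $0$ and counting $\F_{p^r}$-dimensions; both arguments are valid and yield the basis $\{1,h,g^2,g^2h\}$ for any primitive root $g$.
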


The paper is organized as follows. We will prove Theorem \ref{Cayleythm} in Section 2. Then, in Section 3, we will apply Theorem \ref{Cayleythm} to generalized Paley graphs and prove Theorem \ref{max3} and Theorem \ref{max4}. We will introduce Peisert graphs and discuss maximal cliques of Peisert graphs in Section 4, where Theorem \ref{Cayleythm} is not directly applicable. We will then show Theorem \ref{maxP*} and state some conjectures on the cliques of Peisert graphs.

\section{Proof of Theorem \ref{Cayleythm}}
We will prove Theorem \ref{Cayleythm} assuming the axiom of choice, which is equivalent to Zorn's lemma. When $[F:K]<\infty$, Theorem \ref{Cayleythm} follows from Lemma \ref{Caylem}. We first recall some basic definitions.

\begin{defn}
Let $A$ be partially ordered by $\leq$.
\begin{itemize}
    \item A subset $B \subset A$ is a chain if for all $x, y \in B,$ either $x \leq y$ or $y \leq x$.
    \item An upper bound for $B \subset A$ is an element $u \in A$ such that $b \leq u$ for all $b \in B$.
    \item  A maximal element of $A$ is an element $m \in A$ such that $m \leq x$ implies $m=x$ for any $x \in A$.
\end{itemize}
\end{defn}

\begin{thm} [Zorn's lemma] 
If $A$ is a nonempty partially ordered set in which every chain has an upper bound, then $A$ has a maximal element.
\end{thm}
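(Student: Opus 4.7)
The plan is to derive Zorn's lemma from the Axiom of Choice by a transfinite-recursion argument. Assume for contradiction that $A$ is a nonempty poset in which every chain has an upper bound yet no maximal element exists. Then for every $x \in A$ the set $\{y \in A : y > x\}$ is nonempty, so by the Axiom of Choice there is a function $f : A \to A$ with $f(x) > x$ for every $x$. Likewise, for every chain $C \subseteq A$ there exists a strict upper bound (apply $f$ to any upper bound of $C$, which the hypothesis guarantees), so a second application of choice yields a function $g$ assigning to each chain $C$ an element $g(C) \in A$ with $g(C) > c$ for all $c \in C$.

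Next, I would build a strictly increasing transfinite sequence $(x_\alpha)$ indexed by the class of ordinals via the recursion: pick any $x_0 \in A$; put $x_{\alpha+1} = f(x_\alpha)$; and for a limit ordinal $\lambda$ put $x_\lambda = g(\{x_\beta : \beta < \lambda\})$. A routine transfinite induction confirms two things in tandem: the initial segment $\{x_\beta : \beta < \alpha\}$ is always a chain (so that the limit clause is well defined), and the map $\alpha \mapsto x_\alpha$ is strictly order-preserving.

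Finally, I would invoke Hartogs' theorem, which supplies an ordinal $\kappa$ that admits no injection into $A$. The construction above, however, yields an injection $\alpha \mapsto x_\alpha$ from $\kappa$ into $A$, a contradiction. Hence $A$ must contain a maximal element.

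The main obstacle is formalizing the transfinite recursion and the appeal to Hartogs within a fixed set-theoretic framework, since the recursion runs over the proper class of ordinals rather than over a set. A completely parallel but purely set-theoretic presentation can be given via the Bourbaki--Witt fixed-point theorem applied to the collection of well-ordered chains of $A$ ordered by end-extension, replacing transfinite induction with a ``tower'' argument: one shows that the family of subsets closed under $f$ and under chain suprema has a smallest member, and that member is itself a chain with an upper bound $u$ satisfying $u = f(u)$, contradicting $f(u) > u$. This version avoids ordinals entirely, at the cost of a more delicate verification that the intersection of all such towers is itself a tower.
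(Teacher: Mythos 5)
The paper does not prove this statement at all: Zorn's lemma is quoted there as a standard foundational fact, equivalent to the axiom of choice, and is used as a black box in the proof of Theorem \ref{Cayleythm}. So there is no in-paper argument to compare against. Your sketch is the classical derivation of Zorn's lemma from the axiom of choice and is correct in outline: choice yields a function $f$ with $f(x)>x$ (using the assumed absence of maximal elements) and a strict upper bound $g(C)$ for every chain $C$ (an upper bound exists by hypothesis, and applying $f$ to it makes it strict); transfinite recursion then produces a strictly increasing, hence injective, ordinal-indexed sequence in $A$, contradicting Hartogs' theorem. You correctly flag the one genuine technical point, namely that the recursion as written ranges over the proper class of ordinals; the cleanest repair is to run the recursion only up to the Hartogs ordinal $\kappa$ of $A$ (a set-level recursion), at which point the injection $\kappa\hookrightarrow A$ already gives the contradiction. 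Your alternative via Bourbaki--Witt towers is the other standard, ordinal-free route. Either version is a complete proof once the recursion or tower argument is written out in full; for the purposes of this paper, the statement is simply assumed.
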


The following lemma is the key to prove Theorem \ref{Cayleythm}.

\begin{lem}\label{Caylem}
Let $F,S,K,X$ be defined as in Theorem \ref{Cayleythm}. Let $V$ be a clique in $X$ such that $V$ is a subspace of $F$ over $K$, with $K \subset V$. If $V$ is not a maximal clique in $X$, then there is $g \in F \setminus V$, such that $V \oplus gK$ is a clique in $X$. 
\end{lem}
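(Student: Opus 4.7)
The plan is to take the very element $g \in F \setminus V$ that witnesses the failure of $V$ to be maximal and show that this same $g$ works in the lemma. Since $V$ is not a maximal clique, by definition there exists $g \in F \setminus V$ such that $V \cup \{g\}$ is still a clique; that is, $g - v \in S$ for every $v \in V$. First I would verify that the sum $V + gK$ is direct: if $gk \in V$ for some $k \in K^*$, then multiplying by $k^{-1} \in K$ would force $g \in V$ because $V$ is a $K$-subspace, contradicting $g \notin V$. Hence $gK \cap V = \{0\}$ and every element of $V + gK$ is uniquely expressible as $v + gk$ with $v \in V$ and $k \in K$.

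The core step is to check that any nonzero difference $(v_1 + gk_1) - (v_2 + gk_2) = v + gk$, with $v := v_1 - v_2 \in V$ and $k := k_1 - k_2 \in K$, lies in $S$. If $k = 0$, then $v \ne 0$ and $v = v - 0 \in S$ because $V$ is a clique containing $0$. If $k \ne 0$, I would factor
\[
v + gk = k\bigl(g + k^{-1}v\bigr),
\]
setting $w := k^{-1}v$, which lies in $V$ since $V$ is a $K$-subspace. The element $g + w = g - (-w)$ is nonzero (else $g = -w \in V$, contradiction) and lies in $S$ because $-w \in V$ and $V \cup \{g\}$ is a clique. Moreover $k \in K^* \subset V \setminus \{0\}$, so $k = k - 0 \in S$ because $V$ is a clique containing $0$. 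The assumed multiplicative closure of $S$ then yields $k(g+w) \in S$, as required.

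I do not foresee a genuine obstacle: the hypotheses are perfectly calibrated for this factorization argument. The one idea to spot is that multiplication by elements of $K$ is the right operation to leverage, because $K \subset V$ forces $K^* \subset S$, and multiplicative closure of $S$ then transports the edge relation across the scaling $g + w \mapsto k(g+w)$. This also clarifies why the lemma extends $V$ by all of $gK$ rather than merely by $\{g\}$: the entire $K$-line through $g$ comes along for free once the single vertex $g$ does.
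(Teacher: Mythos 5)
Your proof is correct and takes essentially the same route as the paper's: the same factorization $v+gk=k\bigl(g+k^{-1}v\bigr)$ combined with the multiplicative closure of $S$, with $k\in S$ obtained from the clique property (you via $K\subset V$, the paper via $K$ itself being a clique). Your explicit checks that the sum $V+gK$ is direct and that the relevant differences are nonzero are details the paper leaves implicit, but the argument is the same.
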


\begin{proof}
Assume that $V$ is not a maximal clique. Then there is $g \notin V$, such that $\{g\} \cup V$ forms a larger clique, i.e. for any $v \in V$, $g+v=g-(-v) \in S$. The assumption that $V$ is a clique in $X$ implies that for any distinct $u,v \in V$, $u-v \in S$. 

Let $a,b \in K$ and $u,v \in V$, we consider whether the two distinct vertices $u+ga$ and $v+gb$ are adjacent. We have the difference
$
(u+ga)-(v+gb)=(u-v)+g(a-b).
$
\begin{itemize}
    \item If $a=b$, then $(u+ga)-(v+gb)=u-v \in S.$
    \item If $a \neq b$, then $(u+ga)-(v+gb)=(a-b) \big(g+ (a-b)^{-1} (u-v)\big)$. Note that $a,b \in K$ implies $a-b \in S$, and $(a-b)^{-1}(u-v) \in V$ implies $g+ (a-b)^{-1} (u-v) \in S$. Since $S$ is closed under multiplication, we obtain that $(u+ga)-(v+gb) \in S$.
\end{itemize}
Therefore, $V \oplus gK$ forms a clique in $X$.
\end{proof}

Now we are ready to prove Theorem \ref{Cayleythm}.

\begin{proof}[Proof of Theorem \ref{Cayleythm}]
Let $\CC$ be the set of cliques in the Cayley graph $X$, which are subspaces of $F$ over $K$. Then $\CC$ is partially ordered by the set inclusion and the assumption that the proper subfield $K$ forms a clique implies that $\CC$ is nonempty.

Let $\FF \subset \CC$ be a chain, we claim that $M=\cup_{C \in \FF} C$ is an upper bound for $\FF$. To prove that, it suffices to show $M \in \CC$, i.e. $M$ is a clique, with vector space structure. It is easy to verify that $M$, as the union of vector spaces in the chain $\FF$, is a vector space over $K$. Let $a, b \in M$, then $a \in C_1, b \in C_2$ for some $C_1, C_2 \in \FF$. Since $\FF$ is a chain, we may as well assume $C_1 \subset C_2$, then $a,b \in C_2$, and thus $a-b \in S$. This shows that $M$ is a clique.

Therefore, by Zorn's lemma, $\CC$ has a maximal element, say $C$. If $C$ is not a maximal clique in $X$, then by Lemma \ref{Caylem}, there is $C' \in \CC$, such that $C$ is a proper subset of $C'$, which contradicts the maximality of $C$ in $\CC$. Therefore, $C$ is indeed a maximal clique in $X$. 
\end{proof}

\begin{rem}
Note that in the proof of Theorem \ref{Cayleythm} and Lemma \ref{Caylem}, we did not use the fact that the multiplication in the field is commutative. By slightly modifying the statement and the proof of Lemma \ref{Caylem}, it is straightforward to generalize Theorem \ref{Cayleythm} to the setting of division rings. Recall that division rings are rings in which every nonzero element has a multiplicative inverse, and Wedderburn's little theorem states that all finite division rings are fields. The notion of vector spaces over a field will be replaced by modules over a division ring accordingly. 
\end{rem}

\section{Maximal cliques of generalized Paley graphs}

In this section, we apply Theorem \ref{Cayleythm} to generalized Paley graphs. In the following discussion, we use the following standard notation: for a vector space $V$ over a field $K$, we use $\operatorname{dim}_{K} V$ to denote the dimension of $V$ over $K$.

Note that if the field $F$ is given by the finite field $\F_q$, and $S \subset \F_q^*$ is closed under multiplication, then $S$ must be also closed under inverse, and thus $S$ must be a multiplicative subgroup of $\F_q^*$. Recall that generalized Paley graphs are Cayley graphs defined over a finite field $\F_q$, with the connection set being a nontrivial multiplicative subgroup of $\F_q^*$. Therefore, in the case that the field $F$ is finite, the Cayley graph $X$ defined in Theorem \ref{Cayleythm} is exactly given by a generalized Paley graph.

\begin{cor}\label{corGP}
Let $t,d,s$ be positive integers such that $d>1$. Let $p$ be a prime such that $q=p^s \equiv 1 \pmod {2d}$. Suppose there is a proper subfield $K$ of $\F_q$ with $|K|=p^t$, such that $K$ forms a clique in the generalized Paley graph $GP(q,d)$. Then there is a subspace $V$ of $\F_q$ over $K$, such that $K \subset V$, $\operatorname{dim}_{K} V \leq \frac{s}{2t}$, and $V$ forms a maximal clique in $GP(q,d)$. 
\end{cor}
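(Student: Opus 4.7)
The plan is to deduce Corollary \ref{corGP} as a direct application of Theorem \ref{Cayleythm}, combined with the standard trivial bound $\omega\big(GP(q,d)\big) \le \sqrt{q}$ from \cite[Lemma 5.2]{Yip2}. First I would verify that the hypotheses of Theorem \ref{Cayleythm} are satisfied with $F=\F_q$ and $S=(\F_q^*)^d$. The set $S$ is a multiplicative subgroup of $\F_q^*$, hence closed under multiplication. The condition $q \equiv 1 \pmod{2d}$ is exactly what is needed to guarantee $-1 \in S$: since $d \mid q-1$, the subgroup $S$ has index $d$ and order $(q-1)/d$, and $2d \mid q-1$ forces the unique element of order $2$ in $\F_q^*$ to lie in $S$. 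The proper subfield $K$ is assumed to be a clique in $X=GP(q,d)$, and $[F:K]=s/t<\infty$.

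Next, Theorem \ref{Cayleythm} immediately produces a subspace $V$ of $\F_q$ over $K$ with $K \subset V$ such that $V$ forms a maximal clique in $GP(q,d)$. This handles the structural part of the conclusion without further work.

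It then remains to bound $\dim_K V$. Since $V$ is a clique in $GP(q,d)$, the trivial upper bound gives
\[
|V| \le \omega\big(GP(q,d)\big) \le \sqrt{q} = p^{s/2}.
\]
On the other hand, $V$ is a $K$-subspace of $\F_q$, so
\[
|V| = |K|^{\dim_K V} = p^{t \cdot \dim_K V}.
\]
Comparing exponents yields $t \cdot \dim_K V \le s/2$, i.e. $\dim_K V \le \frac{s}{2t}$, which completes the proof.

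There is essentially no obstacle here: the work has already been done in Theorem \ref{Cayleythm}. The only subtlety worth flagging is the verification that $-1 \in (\F_q^*)^d$, which is why the hypothesis $q \equiv 1 \pmod{2d}$ (rather than merely $q \equiv 1 \pmod d$) is needed; everything else is a clean combination of the theorem with the trivial clique bound.
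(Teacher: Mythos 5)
Your proposal is correct and follows the same route as the paper: invoke Theorem \ref{Cayleythm} (with $F=\F_q$, $S=(\F_q^*)^d$, noting $-1\in S$ since $2d \mid q-1$) to get the maximal clique $V$ with a $K$-vector space structure, then use the trivial bound $\omega\big(GP(q,d)\big)\le\sqrt{q}$ to deduce $t\dim_K V\le s/2$. Your explicit verification of the hypotheses of Theorem \ref{Cayleythm} is a nice touch, but the argument is essentially identical to the paper's.
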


\begin{proof}
Theorem \ref{Cayleythm} implies the existence of such $V$ without the restriction on $\operatorname{dim} V$. Recall that the trivial upper bound on the clique number is $\sqrt{q}$. Thus, $\operatorname{dim}_{\F_p} V= t\operatorname{dim}_K V \leq s/2$.
\end{proof}

An immediate corollary of Corollary \ref{corGP} is the following.

\begin{cor}\label{corrr}
Let $r,d$ be positive integers such that $d>1$. If $p$ is a prime such that $q=p^{dr} \equiv 1 \pmod {2d}$ and $d \mid \frac{q-1}{p^r-1}$, then either $\F_{p^{r}}$ is a maximal clique in $GP(q,d)$, or $\omega\big(GP(q,d)\big) \geq q^{2/d}$.
\end{cor}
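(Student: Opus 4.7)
The plan is to deduce Corollary \ref{corrr} as an immediate consequence of Corollary \ref{corGP}, specialized to the subfield $K = \F_{p^r}$. First I would verify that the hypotheses of Corollary \ref{corGP} are met: since $d > 1$, the subfield $\F_{p^r}$ is proper in $\F_q = \F_{p^{dr}}$; and since $d \mid \frac{q-1}{p^r-1}$, Theorem \ref{t4} guarantees that $\F_{p^r}$ already forms a clique in $GP(q,d)$. With $t = r$ and $s = dr$, Corollary \ref{corGP} then produces a subspace $V$ of $\F_q$ over $\F_{p^r}$ containing $\F_{p^r}$ such that $V$ is a maximal clique in $GP(q,d)$ and
\[
\operatorname{dim}_{\F_{p^r}} V \leq \frac{s}{2t} = \frac{d}{2}.
\]

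Next I would split on the value of $\operatorname{dim}_{\F_{p^r}} V$. If $\operatorname{dim}_{\F_{p^r}} V = 1$, then the containment $\F_{p^r} \subset V$ forces $V = \F_{p^r}$, and the conclusion of Corollary \ref{corGP} immediately gives that $\F_{p^r}$ is a maximal clique in $GP(q,d)$, yielding the first alternative. Otherwise $\operatorname{dim}_{\F_{p^r}} V \geq 2$, in which case
\[
|V| \geq |\F_{p^r}|^2 = p^{2r} = q^{2/d},
\]
and since $V$ is in particular a clique in $GP(q,d)$, we conclude $\omega\bigl(GP(q,d)\bigr) \geq q^{2/d}$, yielding the second alternative.

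There is no real obstacle here: once Corollary \ref{corGP} is in hand, the argument is a dichotomy based on the only two possible dimensions ($1$ or at least $2$) of a $\F_{p^r}$-subspace lying between $\F_{p^r}$ and a maximal clique of dimension at most $d/2$. The only sanity check worth flagging is that the bound $\operatorname{dim}_{\F_{p^r}} V \leq d/2$ is consistent with $\operatorname{dim}_{\F_{p^r}} V \geq 2$ only when $d \geq 4$, but this does not affect the statement: for $d = 2, 3$ the dimension must be exactly $1$ and we automatically land in the first alternative.
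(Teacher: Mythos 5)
Your proposal is correct and follows essentially the same route as the paper: apply Theorem \ref{t4} to see that $\F_{p^r}$ is a clique, invoke Corollary \ref{corGP} to get a maximal clique $V$ that is an $\F_{p^r}$-subspace containing $\F_{p^r}$, and split on whether $\operatorname{dim}_{\F_{p^r}} V$ equals $1$ or is at least $2$. The extra remarks about the bound $\operatorname{dim}_{\F_{p^r}} V \leq d/2$ are harmless but not needed for the dichotomy.
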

\begin{proof}
By Theorem \ref{t4}, $\F_{p^{r}}$ is a clique in $GP(q,d)$. By Corollary \ref{corGP}, there is a maximal clique $V$ in $GP(q,d)$, such that $V$ is a vector space over $\F_{p^r}.$ If $\operatorname{dim}_{\F_{p^r}} V=1$, then $V=\F_{p^r}$ is a maximal clique; otherwise, $\operatorname{dim}_{\F_{p^r}} V \geq 2$ and thus $\omega\big(GP(q,d)\big) \geq |V| \geq p^{2r}=q^{2/d}$.
\end{proof}

Now it is straightforward to prove Theorem \ref{max3}.
\begin{proof}[Proof of Theorem \ref{max3}]
Since $q=p^{3r} \equiv 1 \pmod 6$, we have $p^r \equiv 1 \pmod 3$ and $\frac{p^{3r}-1}{p^r-1}= 1+p^r+p^{2r} \equiv 0 \pmod 3$. By Corollary \ref{corrr}, either $\F_{p^{r}}$ is a maximal clique in the cubic Paley graph $GP(q,3)$ , or $\omega\big(GP(q,3)\big) \geq q^{2/3}$. However, the second possibility is ruled out by the trivial upper bound $\omega\big(GP(q,3)\big) \leq \sqrt{q}$.
\end{proof}

For a quadruple Paley graph of quartic order, Corollary \ref{corrr} implies the following.  Note that we do not need to assume $q=p^{4r} \equiv 1 \pmod 8$ since it is always guaranteed for an odd prime $p$.
\begin{cor} \label{cor4}
Let $r,d$ be a positive integer. If $p$ is an odd prime and $q=p^{4r}$, then either $\F_{p^{r}}$ is a maximal clique in the quadruple Paley graph $GP(q,4)$, or $\omega\big(GP(q,4)\big) =\sqrt{q}$.
\end{cor}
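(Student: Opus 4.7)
The plan is to deduce the statement as a direct consequence of Corollary \ref{corrr} applied with $d = 4$. Since $d = 4$, the hypotheses of that corollary reduce to checking two items: first, that $q = p^{4r} \equiv 1 \pmod 8$; second, that $4$ divides $\frac{q-1}{p^r-1} = 1+p^r+p^{2r}+p^{3r}$.

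For the congruence $p^{4r} \equiv 1 \pmod 8$, I would note that any odd integer squares to $1$ modulo $8$, so $p^2 \equiv 1 \pmod 8$ and hence $p^{4r} = (p^2)^{2r} \equiv 1 \pmod 8$ for every $r \geq 1$. For the divisibility, I would argue modulo $4$: since $p$ is odd, $p^r \equiv \pm 1 \pmod 4$. If $p^r \equiv 1 \pmod 4$, the sum $1+p^r+p^{2r}+p^{3r}$ is $1+1+1+1 \equiv 0 \pmod 4$; if $p^r \equiv -1 \pmod 4$, it is $1 + (-1) + 1 + (-1) \equiv 0 \pmod 4$. Either way $4$ divides the sum, so both hypotheses of Corollary \ref{corrr} hold.

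Applying that corollary then yields the dichotomy that either $\F_{p^r}$ is a maximal clique in $GP(q,4)$, or $\omega\big(GP(q,4)\big) \geq q^{2/4} = \sqrt{q}$. Combined with the trivial upper bound $\omega\big(GP(q,4)\big) \leq \sqrt{q}$, the second branch collapses to the equality $\omega\big(GP(q,4)\big) = \sqrt{q}$, which is exactly the stated dichotomy. There is no genuine obstacle here: everything of substance has been absorbed into Corollary \ref{corrr}, and the case $d=4$ is distinguished precisely because the threshold $q^{2/d}$ produced there coincides with the trivial upper bound $\sqrt{q}$, so that the dichotomy from Corollary \ref{corrr} sharpens from a lower bound on $\omega$ into an exact determination of it.
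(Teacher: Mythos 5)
Your proposal is correct and matches the paper's own argument: verify that $p^{4r}\equiv 1 \pmod 8$ holds automatically for odd $p$, check $4 \mid 1+p^r+p^{2r}+p^{3r}$ in both residue cases of $p^r$ modulo $4$, and then combine Corollary \ref{corrr} with the trivial upper bound $\sqrt{q}=q^{2/4}$ to turn the lower-bound branch into the equality $\omega\big(GP(q,4)\big)=\sqrt{q}$. No gaps; this is essentially the same route as in the paper.
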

\begin{proof}
Note that $\frac{p^{4r}-1}{p^r-1}=1+p^r+p^{2r}+p^{3r} \equiv 0 \pmod 4$ for both the cases $p^r \equiv 1 \pmod 4$ and $p^r \equiv 3 \pmod 4$. The statement of the corollary follows form Corollary \ref{corrr} and the $\sqrt{q}=q^{2/4}$ trivial upper bound on $\omega\big(GP(q,4)\big)$.
\end{proof}
Therefore, to prove Theorem \ref{max4}, it suffices to show that the trivial upper bound $\sqrt{q}$ cannot be attained. There are some recent improvements on the upper bound of the clique number of a generalized Paley graph using polynomial methods and basic number theory when $q$ is a non-square \cite{BSW, HP, Yip, Yip2} and some methods could be extended to the case that $q$ is a square \cite{Yip2}. In particular, we need the following two results from \cite{Yip2}.
\begin{thm}[{\cite[Theorem 5.8]{Yip2}}] \label{t5}
Let $d$ be a positive integer larger than 1. Let $p$ be a prime and $q$ be a power of $p$ such that $q \equiv 1 \pmod{2d}$. If $2 \leq n\leq N=\omega\big(GP(q,d)\big)$ satisfies
$
\binom{n-1+\frac{q-1}{d}}{\frac{q-1}{d}}\not \equiv 0 \pmod p,
$
then $(N-1)n \leq \frac{q-1}{d}$.
\end{thm}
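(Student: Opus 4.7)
My plan is to adapt the polynomial method of Blokhuis and Sziklai, tracking a specific coefficient whose non-vanishing modulo $p$ is governed by $\binom{n-1+m}{m}$, where $m := (q-1)/d$. Let $A$ be a clique of size $N = \omega(GP(q,d))$. Since $GP(q,d) = \operatorname{Cay}(\F_q^+; C)$ is vertex-transitive, where $C$ is the multiplicative subgroup of non-zero $d$-th powers with $|C| = m$, I may translate to assume $0 \in A$; the defining property of the clique is that $(a - b)^m = 1$ in $\F_q$ for every pair of distinct $a, b \in A$. Fixing distinct $a_1, \dots, a_n \in A$, I form the test polynomial
$$F(x) := \prod_{i=1}^n (x - a_i)^m \in \F_q[x],$$
of degree $nm$, which satisfies $F(b) = 1$ for every $b \in A \setminus \{a_1, \dots, a_n\}$ and vanishes to order exactly $m$ at each $a_i$.

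Writing $\Psi(x) := \prod_{a \in A}(x - a)$ of degree $N$ and letting $R(x)$ be the unique degree-$< N$ Lagrange interpolant of $F|_A$ (so $R(a_i) = 0$ and $R(b) = 1$ for $b \in A \setminus \{a_1, \dots, a_n\}$), I obtain the identity
$$F(x) = R(x) + \Psi(x)\, Q(x), \qquad \deg Q = nm - N.$$
A short generating-function computation shows $[x^{nm - \ell}] F(x) = [y^{\ell}]\prod_{i=1}^n (1 - a_i y)^m$; when $\ell = m$, the right-hand side is a sum over precisely $\binom{n-1+m}{m}$ weak compositions of $m$ into $n$ parts — this is where the stated binomial coefficient enters. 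The strategy is then to extract a carefully chosen coefficient from both sides of the identity above: under the assumption $(N-1)n > m$, the contributions of $R$ (constrained by $\deg R < N$) and of $\Psi Q$ (constrained by divisibility by $\Psi$) to this coefficient should be forced to vanish modulo $p$, whereas the coefficient inside $F$ itself is a non-trivial $\F_q$-multiple of $\binom{n-1+m}{m}$, yielding the required contradiction.

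The principal obstacle will be pinning down the exact coefficient to extract and executing the bookkeeping without losing factors. The naive first guess — reading off the coefficient of $x^{(n-1)m}$ directly — produces a Newton/power-sum expression in the $a_i$, which is not yet a scalar multiple of the target binomial coefficient; either the extraction has to be refined, or one averages the identity over all $n$-subsets $\{a_1, \dots, a_n\} \subset A$ in order to symmetrise and isolate a clean combinatorial invariant of the clique. I expect the decisive step to be a Newton-identity translation between elementary symmetric functions and power sums in the $a_i$, combined with the divisibility $\Psi \mid F - R$ and the trivial bound $N \leq \sqrt{q}$, converting the hypothesis $\binom{n-1+m}{m} \not\equiv 0 \pmod p$ into the sharp inequality $(N-1)n \leq m$. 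What makes the argument delicate, rather than routine, is precisely that a crude degree count on $F - 1$ yields only the much weaker bound $N \leq (n-1)(m+1)$; the binomial-coefficient hypothesis must be used to gain a factor of $\sim n$, and that is where I anticipate the bulk of the technical work.
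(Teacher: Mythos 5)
Your proposal is a plan rather than a proof, and the decisive step is exactly the part you leave open. Note first that the paper does not prove this statement at all: it is quoted from \cite[Theorem 5.8]{Yip2}, whose argument is a Hanson--Petridis--type application of Stepanov's method (hyperderivatives of an auxiliary polynomial, in the spirit of \cite{HP} and the R\'edei-polynomial argument of \cite{BSW}). Judged on its own terms, your sketch has a genuine gap: after writing $F(x)=\prod_{i=1}^n(x-a_i)^m$ and $F=R+\Psi Q$, you defer ``pinning down the exact coefficient to extract'' and the bookkeeping that is supposed to turn $\binom{n-1+m}{m}\not\equiv 0\pmod p$ into $(N-1)n\le m$. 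That is not a technical afterthought; it is the whole theorem. A coefficient comparison against an interpolant only records simple vanishing/values of $F$ on the clique, and a multiplicity-free degree count of this kind can only yield additive bounds of the shape $N\lesssim m+O(n)$ (as you yourself observe); nothing in the proposal supplies the mechanism that gains the factor of $n$ and produces the multiplicative inequality $(N-1)n\le m$.

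Moreover, the role you assign to the binomial coefficient does not match how it actually functions, and the symmetric-function route you gesture at (Newton identities, averaging over $n$-subsets) is not known to close this gap. The coefficient $[x^{(n-1)m}]F$ is a weighted sum of monomials in the $a_i$ over the $\binom{n-1+m}{m}$ weak compositions, not a nonzero multiple of $\binom{n-1+m}{m}$, and there is no evident reason it is nonzero for a clique. In the genuine proof the binomial coefficient appears quite differently: one builds an auxiliary polynomial of degree about $m+n-1$ (of the shape $\sum_{b\in B}c_b(x-b)^{m+n-1}$ with $B$ an $n$-subset of the clique, or a suitable hyperderivative of a R\'edei-type polynomial), chooses the coefficients so that it vanishes to order at least $n$ at every clique point outside $B$ and to high order at the points of $B$, and the hypothesis $\binom{n-1+m}{m}\not\equiv 0\pmod p$ is precisely what guarantees this polynomial is not identically zero; counting zeros with multiplicity against the degree then gives $(N-1)n\le m$. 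That multiplicity device --- Hasse derivatives exploiting $(a-b)^m=1$ --- is absent from your outline, so as it stands the proposal does not establish the theorem.
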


\begin{thm}[{\cite[Theorem 5.12]{Yip2}}]\label{t7}
Let $d$ be a positive integer at least $3$ and $p$ be an odd prime such that $d \mid (p-1)$. If $q$ is an even power of $p$, then $\omega\big(GP(q,d)\big) < \sqrt{\frac{q}{d}} \big(1+\frac{1}{2\sqrt{d}}+\frac{1}{8d}\big)+1.$
\end{thm}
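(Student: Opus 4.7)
The plan is to derive the bound by applying Theorem~\ref{t5} with a carefully chosen integer $n$ close to $\sqrt{q/d}$, using Lucas's theorem on binomial coefficients to verify the non-vanishing hypothesis. Write $q = p^{2s}$ and set $k := (p-1)/d$, which is a positive integer by the hypothesis $d \mid (p-1)$. The geometric-series identity gives
\[
\frac{q-1}{d} = k \sum_{i=0}^{2s-1} p^i,
\]
so the base-$p$ expansion of $(q-1)/d$ consists of $2s$ digits all equal to $k$. By Lucas's theorem, $\binom{n-1+(q-1)/d}{(q-1)/d} \not\equiv 0 \pmod{p}$ if and only if every base-$p$ digit of $n-1$ is at most $T := p-1-k = (p-1)(d-1)/d$.

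Next I would choose $n$ explicitly. Set $b := \lfloor p/\sqrt{d} \rfloor$ and
\[
n := 1 + b \cdot \frac{p^s - 1}{p-1} = 1 + b(1 + p + \cdots + p^{s-1}),
\]
so that in base $p$, the integer $n-1$ has its lowest $s$ digits all equal to $b$ and its remaining digits equal to $0$. A short case analysis (using $p \geq d+1$) shows $b \leq T$ whenever $d \geq 3$, so every digit of $n-1$ lies in the allowed range and the Lucas condition is satisfied.

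Let $N := \omega(GP(q,d))$. If $n > N$, then $N < n$, and the crude estimate $n < 1 + \sqrt{q/d} \cdot (1 + 1/(p-1))$ combined with the inequality $p - 1 \geq 8d/(4\sqrt{d}+1)$ (which follows from $p \geq d+1$, with $d=3$ handled by noting that the smallest admissible prime is $p = 7$) already places $N$ below $\sqrt{q/d}(1 + 1/(2\sqrt{d}) + 1/(8d)) + 1$. Otherwise $n \leq N$, so Theorem~\ref{t5} yields $(N-1)n \leq (q-1)/d$, that is,
\[
N \leq \frac{q-1}{dn} + 1.
\]
Inserting the lower estimate $b \geq p/\sqrt{d} - 1$ into the formula for $n$ and expanding the geometric series shows that the right-hand side is bounded above by $\sqrt{q/d}(1 + 1/(2\sqrt{d}) + 1/(8d)) + 1$, completing the proof in this case.

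The hardest step will be the final quantitative estimate: the rounding loss $p/\sqrt{d} - b \in [0,1)$ must be absorbed into the correction factor $1 + 1/(2\sqrt{d}) + 1/(8d)$. This factor is essentially tight; for instance when $(d,p) = (6,7)$, the bound produced by the construction matches the claimed bound to leading order in $s$, which explains the precise form of the second-order term $1/(8d)$. Verifying the inequality at the borderline admissible pairs (smallest $p$ for each given $d$), where the slack between $b\sqrt{d}$ and $p-1$ is minimal, is the most delicate part; in those cases a refined multi-digit choice of $n-1$ — allowing the leading digit to be $\lceil p/\sqrt{d}\rceil$ while trimming lower digits — may be needed to recover the claimed constants.
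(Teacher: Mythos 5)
First, a point of reference: the paper does not prove this statement --- it is quoted verbatim from \cite[Theorem 5.12]{Yip2} --- so there is no in-paper proof to compare against. Your framework is nonetheless the right one and matches the strategy of \cite{Yip2}: apply Theorem~\ref{t5} with $n$ near $\sqrt{q/d}$, and use the fact that $(q-1)/d$ has all base-$p$ digits equal to $k=(p-1)/d$, so that by Lucas's theorem the nonvanishing hypothesis reduces to requiring every base-$p$ digit of $n-1$ to be at most $T=(p-1)(d-1)/d$.

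The gap is in the specific choice $n-1=b(1+p+\dots+p^{s-1})$ with $b=\lfloor p/\sqrt d\rfloor$. In the main case $n\le N$, the bound you need amounts to $\frac{p-1}{\sqrt d\,\lfloor p/\sqrt d\rfloor}\le 1+\frac{1}{2\sqrt d}+\frac{1}{8d}$, and this is false for admissible pairs with $p$ close to $d$. Concretely, take $d=22$, $p=23$ (so $k=1$, $T=21$): then $b=\lfloor 23/\sqrt{22}\rfloor=4$ and $\frac{p-1}{\sqrt d\,b}=\frac{22}{4\sqrt{22}}=\frac{\sqrt{22}}{4}\approx 1.1726$, while $1+\frac{1}{2\sqrt{22}}+\frac{1}{176}\approx 1.1123$. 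Your argument then only yields $N\lesssim \frac14\sqrt q+1\approx 0.2500\sqrt q$, whereas the theorem asserts $N<0.2372\sqrt q+1$; e.g.\ for $s=2$ one gets $n=97$, hence $N\le 132$, versus the claimed $N\le 126$. You flag exactly this danger in your final paragraph but do not resolve it, and it is not a minor perturbation of your construction: forcing a single constant digit $b$ makes the relative rounding error of order $\sqrt d/p$, which is of order $1/\sqrt d$ when $p\approx d+1$ --- the same order as the entire correction term $\frac{1}{2\sqrt d}$ you are trying to capture. What does work (and explains the exact constant in the statement) is to let $n-1$ be a full multi-digit base-$p$ approximation from below of $u\,p^s/\sqrt d$, where $u$ is the positive root of $u^2-u/\sqrt d-1=0$, capping each digit at $T$; the total capping loss is at most $k\,\frac{p^s-1}{p-1}\le \frac{p^s-1}{d}$, and balancing the case $n>N$ (where $N<n\le u\,p^s/\sqrt d+1$) against the case $n\le N$ (where $N\le 1+\frac{q-1}{dn}$ with $n\ge \frac{p^s}{\sqrt d}\bigl(u-\frac{1}{\sqrt d}\bigr)$ and $u(u-\frac{1}{\sqrt d})=1$) gives $N<u\sqrt{q/d}+1$ with $u=\frac{1}{2\sqrt d}+\sqrt{1+\frac{1}{4d}}\le 1+\frac{1}{2\sqrt d}+\frac{1}{8d}$. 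As written, your argument proves a correct but strictly weaker bound of the same shape, not the stated theorem.
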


In \cite[Section 5.3]{Yip2}, the author described a general approach to improve the trivial upper bound on the clique number of generalized Paley graphs. The author illustrated the process by improving the $\sqrt{q}$ bound for cubic Paley graphs. The same method can be used to show there is a constant $c<1$, such that $\omega(GP(q,4))<c\sqrt{q}$ for any $q=p^{4r}$. In the case $p \equiv 1 \pmod 4$, this follows from Theorem \ref{t7}. In the case $p \equiv 3 \pmod 4$, in the following lemma we prove a weaker bound; the proof is similar to the proof of \cite[Lemma 5.11]{Yip2}.

\begin{lem}\label{cor44}
Let $r$ be a positive integer, and $p$ a prime such that $p \equiv 3 \pmod 4$. If $q=p^{4r}$, then $\omega\big(GP(q,4)\big)\leq \sqrt{q}-1$.
\end{lem}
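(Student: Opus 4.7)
The plan is to argue by contradiction using Theorem \ref{t5} applied with $d = 4$. Since the trivial upper bound gives $N := \omega(GP(q,4)) \leq \sqrt{q} = p^{2r}$, it suffices to rule out $N = p^{2r}$. I will exhibit an integer $n$ with $2 \leq n \leq N$ such that $\binom{n-1+(q-1)/4}{(q-1)/4} \not\equiv 0 \pmod p$, yet for which $(N-1)n > (q-1)/4$, directly contradicting Theorem \ref{t5}.

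The main preparation is the base-$p$ expansion of $(q-1)/4$. Writing $m = (p+1)/4$, which is a positive integer because $p \equiv 3 \pmod 4$, a direct computation yields $(p^2-1)/4 = (m-1)p + (p-m)$, and then multiplying by the geometric sum $1 + p^2 + \cdots + p^{4r-2}$ gives
\[
\frac{p^{4r}-1}{4} = \sum_{i=0}^{2r-1}\Bigl((m-1)p^{2i+1} + (p-m)p^{2i}\Bigr).
\]
Thus the $4r$ base-$p$ digits of $(q-1)/4$ alternate: digit $p-m$ at even positions and digit $m-1$ at odd positions. By Kummer's theorem, $\binom{a+b}{a} \not\equiv 0 \pmod p$ iff no carries occur when adding $a$ and $b$ in base $p$, so the no-carry constraint on $n-1$ is that its $i$-th digit be at most $m-1$ at even $i$ and at most $p-m$ at odd $i$.

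Subject to the additional restriction $n - 1 \leq p^{2r} - 1$ (so that $n \leq N$), the largest admissible $n-1$ takes digit $m-1$ at the even positions $0, 2, \ldots, 2r-2$ and digit $p-m$ at the odd positions $1, 3, \ldots, 2r-1$. Summing and using $(p-m)p + (m-1) = 3(p^2-1)/4$ produces $n-1 = 3(p^{2r}-1)/4$, hence $n = (3p^{2r}+1)/4$. This choice satisfies $2 \leq n < p^{2r} = N$, so Theorem \ref{t5} would force
\[
(N-1)n = (p^{2r}-1) \cdot \frac{3p^{2r}+1}{4} \leq \frac{p^{4r}-1}{4} = \frac{(p^{2r}-1)(p^{2r}+1)}{4},
\]
which simplifies to $3p^{2r}+1 \leq p^{2r}+1$, a contradiction. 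Hence $N \leq p^{2r}-1 = \sqrt{q}-1$, as required. The only delicate step is pinning down the base-$p$ digit pattern of $(q-1)/4$; once that is in hand, the extremal no-carry $n$ is forced and the rest of the argument is a short arithmetic comparison.
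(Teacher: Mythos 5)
Your proof is correct and follows essentially the same route as the paper: both argue by contradiction via Theorem \ref{t5}, exploit the alternating base-$p$ digit pattern $\bigl(\frac{p-3}{4},\frac{3p-1}{4}\bigr)$ of $\frac{q-1}{4}$, and verify the non-vanishing of the binomial coefficient by a carry-free argument (you via Kummer, the paper via Lucas). The only difference is the witness: you take the extremal $n-1=\frac{3(p^{2r}-1)}{4}$, while the paper takes the single-digit choice $n-1=\frac{3p-1}{4}p^{2r-1}$; both yield the same contradiction.
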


\begin{proof}
Suppose $\omega\big(GP(q,4)\big)>\sqrt{q}-1$. Together with the trivial upper bound $\sqrt{q}$, we have $N=\omega\big(GP(q,4)\big)=\sqrt{q}=p^{2r}$. Note that the base-$p$ representation of $\frac{q-1}{4}$ is $$\frac{q-1}{4}=\bigg(\frac{p-3}{4},\frac{3p-1}{4},\frac{p-3}{4},\frac{3p-1}{4}, \ldots, \frac{p-3}{4},\frac{3p-1}{4}\bigg)_p.$$
We can take $n-1=\frac{3p-1}{4}p^{2r-1}$ so that $n \leq p^{2r}=N$. Lucas's theorem implies that $$\binom{n-1+\frac{q-1}{4}}{\frac{q-1}{4}} \equiv \binom{\frac{p-3}{4}+\frac{3p-1}{4}}{\frac{p-3}{4}} \equiv \binom{p-1}{{\frac{p-3}{4}}}\not \equiv 0 \pmod p.
$$
By Theorem \ref{t5}, we get $(N-1)n \leq \frac{q-1}{4}$. Therefore, $$(p^{2r}-1)\frac{3p-1}{4}p^{2r-1} \leq \frac{p^{4r}-1}{4}.$$
It follows that
$
3p^{2r}-p^{2r-1} \leq p^{2r}+1,
$
which is impossible. 
\end{proof}

Now we are ready to prove Theorem \ref{max4}.
\begin{proof}[Proof of Theorem \ref{max4}]
By Corollary \ref{cor4}, it suffices to show that the trivial upper bound $\sqrt{q}$ on the clique number of $GP(q,4)$ cannot be attained. This follows immediately from Theorem \ref{t7} in the case $p \equiv 1 \pmod 4$, and Lemma \ref{cor44} in the case $p \equiv 3 \pmod 4$.
\end{proof}

\section{Maximal cliques of Peisert graphs}
In this section, we discuss the maximal cliques of Peisert graphs.

It is well-known that Paley graphs are self-complementary and symmetric. In \cite{WP2}, Peisert discovered a new infinite family of self-complementary symmetric graphs, called $P^*$-graphs. Later people refer to this new family of graphs as Peisert graphs. In fact, Peisert \cite{WP2} showed that apart from an exceptional graph with $23^2$ vertices, Paley graphs and Peisert graphs are the only self-complementary symmetric graphs. Similar to Paley graphs, Peisert graphs are defined on finite fields. 

The {\em Peisert graph} of order $q=p^r$, where $p$ is a prime such that $p \equiv 3 \pmod 4$ and $r$ is even, denoted $P^*_q$, is defined to be the graph with vertices in $\F_{q}$, such that two vertices are adjacent if their difference belongs to the set $$M_q=\{g^j: j \equiv 0,1 \pmod 4\},$$ where $g$ is a primitive root of the field $\F_q$. It is easy to see that the definition does not depend on the choice of the primitive root $g$. In the language of Cayley graphs, we have $P^*_q=\operatorname{Cay}(\F_{q}^+; M_q)$. Note that $M_q$ is not closed under multiplication since $g \cdot g=g^2 \notin M_q$, so we cannot apply Theorem \ref{Cayleythm} directly to Peisert graphs.

Kisielewicz and Peisert \cite{KP} extended the known results of Paley graphs to Peisert graphs. Nevertheless, there is no known improvement to the trivial upper bound $\sqrt{q}$ on its clique number at all. It seems those polynomial methods designed for Paley graphs could not be adapted to work for Peisert graphs directly. The only advantage we can take from the known results on Paley graphs is the following: a Peisert graph $P_q^*$ contains the quadruple Paley graph $GP(q,4)$ as a subgraph, so Peisert graphs have richer cliques than their corresponding quadruple Paley graphs.

The following lemma gives the square root trivial upper bound on the clique number of a Peisert graph. 

\begin{lem}\label{triv}
If $q=p^r$, where $p \equiv 3 \pmod 4$ and $r$ is even, then $\omega(P^*_q) \leq \sqrt{q}$. Moreover, the equality holds if and only if $\F_q=C+g^2C=\{u+g^2v: u,v \in C\}$ for each maximum clique $C$ in $P^*_q$ and each primitive root $g$ of $\F_q$.
\end{lem}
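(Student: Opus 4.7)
The plan is to establish both the upper bound $\omega(P^*_q) \leq \sqrt{q}$ and the equality characterization through a single injectivity argument applied to the addition map $\phi : C \times C \to \F_q$ defined by $\phi(u,v) = u + g^2 v$, where $C$ is a clique in $P^*_q$ and $g$ is a primitive root of $\F_q$.

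First I would record the key algebraic identity $g^2 M_q = \F_q^* \setminus M_q$. Writing $M_q = \{g^j : j \equiv 0,1 \pmod 4\}$, multiplication by $g^2$ shifts exponents to $j \equiv 2,3 \pmod 4$, which is exactly the complement of $M_q$ inside $\F_q^*$. To confirm independence from the choice of primitive root, I would observe that any other primitive root $g'$ satisfies $(g')^2 = g^{2k}$ with $\gcd(k,q-1) = 1$; since $q-1$ is even, $k$ is odd, so $2k \equiv 2 \pmod 4$ and the same exponent-shift argument applies.

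Next I would prove $\phi$ is injective. If $\phi(u_1,v_1) = \phi(u_2,v_2)$, then $u_1 - u_2 = g^2(v_2 - v_1)$: the left side lies in $M_q \cup \{0\}$ because $C$ is a clique, while the right side lies in $g^2 M_q \cup \{0\} = (\F_q^* \setminus M_q) \cup \{0\}$. These two sets intersect only at $0$, forcing both sides to vanish and hence $(u_1,v_1) = (u_2,v_2)$. Injectivity of $\phi$ gives $|C|^2 \leq |\F_q| = q$, which is the required upper bound.

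For the equality characterization, injectivity of $\phi$ immediately yields $|C + g^2 C| = |C|^2$, so $|C| = \sqrt{q}$ holds if and only if $C + g^2 C = \F_q$. Since this equivalence holds for every clique $C$ (in particular every maximum clique) and every primitive root $g$, both directions of the stated ``if and only if'' follow at once. I do not anticipate any substantive obstacle: the whole argument is a pigeonhole/counting step, and its only ingredient is the identity $g^2 M_q = \F_q^* \setminus M_q$, whose verification is the only mildly subtle point because it requires noting that the shift $2k \pmod 4$ is the same for every primitive root.
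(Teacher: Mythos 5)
Your proposal is correct and follows essentially the same argument as the paper: the injectivity of $(u,v)\mapsto u+g^2v$ on $C\times C$ (via the disjointness of $M_q$ and $g^2M_q$), giving $|C|^2\le q$ and the equality characterization $C+g^2C=\F_q$. You merely spell out the key disjointness step that the paper leaves implicit in the phrase ``which is impossible unless $i=i'$ and $j=j'$.''
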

\begin{proof}
Let $N=\omega\big(P^*_q)$ and let $C=\{v_1, v_2, \ldots, v_N\} \subset \F_q$ be a maximum clique in $P^*_q$. Let $g$ be a be a primitive root of $\F_q^*$, and consider the set $W=\{v_i+g^2v_j: 1 \leq i,j \leq N\}$. 
Note that if $v_i+g^2v_j=v_i'+g^2v_j'$, then $v_i-v_i'=g^2(v_j'-v_j)$, which is impossible unless $i=i'$ and $j=j'$. So the element of $W$ are pairwise distinct. This means that $|W|=N^2 \leq q$, which implies $N \leq \sqrt{q}$. Moreover, $N=\sqrt{q}$ if and only if $W=\F_q$.
\end{proof}

Combing Lemma \ref{triv} and Theorem \ref{t4} (regarding Peisert graph as a supergraph of the corresponding quadruple Paley graph), we have the following theorem.

\begin{thm}[{\cite[Theorem 5.1]{KP}}] \label{KP}
Let $q=p^s$, where $p \equiv 3 \pmod 4$ and $s=2k$. If $k$ is odd, then $\omega(P^*_q)= \sqrt{q}$; if $k$ is even, then $\omega(P^*_q) \geq q^{1/4}$ and $\F_{q^{1/4}}$ forms a clique in $P^*_q$.
\end{thm}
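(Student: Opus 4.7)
The plan is to leverage the fact that the quadruple Paley graph $GP(q,4)$ sits inside the Peisert graph $P_q^*$ and then apply the subfield clique construction of Theorem \ref{t4} to the subgraph. Indeed, $(\F_q^*)^4=\{g^{4i}\}\subset M_q$, so $GP(q,4)$ is a spanning subgraph of $P_q^*$, and every clique of $GP(q,4)$ remains a clique of $P_q^*$. Combined with the universal upper bound $\omega(P_q^*)\leq \sqrt{q}$ from Lemma \ref{triv}, this reduces the theorem to a pair of elementary divisibility checks.

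For the case $s=2k$ with $k$ odd, I would apply Theorem \ref{t4} with $d=4$ and take $r=k$, aiming to conclude that $\F_{p^k}=\F_{\sqrt{q}}$ is a clique in $GP(q,4)$. The hypothesis to verify is $4\mid \tfrac{q-1}{p^k-1}=p^k+1$; since $p\equiv 3\pmod 4$ and $k$ is odd, $p^k\equiv 3\pmod 4$, so $p^k+1\equiv 0\pmod 4$ as needed. The resulting clique has size $\sqrt{q}$, and together with Lemma \ref{triv} this forces $\omega(P_q^*)=\sqrt{q}$.

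For the case $k$ even, write $k=2m$, so $q^{1/4}=p^m$; applying Theorem \ref{t4} with $r=m$ requires $4\mid \tfrac{q-1}{p^m-1}=1+p^m+p^{2m}+p^{3m}=(1+p^m)(1+p^{2m})$, which holds automatically because both factors are even when $p$ is odd. This exhibits $\F_{q^{1/4}}$ as a clique in $GP(q,4)\subseteq P_q^*$ and yields $\omega(P_q^*)\geq q^{1/4}$. No real obstacle is anticipated: the argument is a clean combination of the subgraph inclusion, the already-established bound in Lemma \ref{triv}, and two parity computations. The only subtlety worth flagging is that the subfield claim is not automatic from Theorem \ref{t4}; one must check the divisibility condition separately in each parity regime of $k$, rather than invoking a uniform statement.
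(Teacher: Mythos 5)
Your proposal is correct and matches the paper's approach exactly: the paper derives Theorem \ref{KP} in one line by combining Lemma \ref{triv} with Theorem \ref{t4}, viewing $P^*_q$ as a supergraph of $GP(q,4)$, and your two parity/divisibility checks ($4 \mid p^k+1$ for odd $k$, and $4 \mid (1+p^m)(1+p^{2m})$ for $k=2m$) are precisely the verifications needed to invoke Theorem \ref{t4}.
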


In the case that $q$ is not a quartic, the trivial upper bound on the clique number is attained. While in the case that $q$ is a quartic, there remains a huge gap between the best known lower bound $q^{1/4}$ and the best known upper bound $\sqrt{q}$ on the clique number. See \cite[Lemma 3.3.6]{NM} for a discussion on how the tightness of the trivial upper bound would provide a new proof on showing Paley graphs and Peisert graphs are non-isomorphic.

Now we are ready to prove Theorem \ref{maxP*}. Note that it is similar to Corollary \ref{cor4}. Although the connection set of a Peisert graph is not closed under multiplication, we can take advantage of the fact that the maximal clique, with vector space structure, has dimension at most 2.

\begin{proof}[Proof of Theorem \ref{maxP*}]
Let $g$ be a primitive root in $\F_q$. By Theorem \ref{KP}, $\F_{p^r}$ is a clique in $P_q^*$.  Note that each nonzero element in $\F_{p^r}$ has the form $g^{j}$ for some integer $j \equiv 0 \pmod 4$. 

Suppose $\F_{p^r}$ is not a maximal clique in $P^*_q$.
 Then there is $h \notin \F_{p^r}$, such that $\{h\} \cup \F_{p^r}$ forms a larger clique, i.e. for any $a \in \F_{p^r}$, $h-a=g^j$ for some integer $j \equiv 0,1 \pmod 4$. We claim that $\F_{p^r} \oplus h\F_{p^r}$ forms a clique. Let $a,b,c,d \in \F_{p^r}$, we consider the difference
$
(a+hb)-(c+hd)=(a-c)+h(b-d).
$
\begin{itemize}
    \item If $b=d$, then $a-c \in \F_{p^r}$ and $a-c=g^j$ for some $j \equiv 0 \pmod 4$.
    \item If $b \neq d$, then $(a+hb)-(c+hd)=(b-d) \big(h- (b-d)^{-1}(c-a)\big)=g^j \cdot g^k=g^{j+k}$ for some $j \equiv 0 \pmod 4, k \equiv 0,1 \pmod 4$, and thus $j+k \equiv 0,1 \pmod 4$.
\end{itemize}
This shows that $a+hb$ and $c+hd$ are adjacent. Therefore, $C=\F_{p^r} \oplus h\F_{p^r}$ is a clique in $P^*_q$ and $|C|=p^{2r}=\sqrt{q}$. Moreover, $\omega(P^*_q)= \sqrt{q}$ and $C$ is a maximum clique. By Lemma \ref{triv}, $$\F_q=C \oplus g^2C=\F_{p^r} \oplus h\F_{p^r} \oplus g^2\F_{p^r} \oplus g^2h\F_{p^r}.
$$
It follows that $\{1,h,g^2,g^2h\}$ forms a basis of $\F_q$ over $\F_{p^r}$.
\end{proof}

SageMath \cite{Sage} provides powerful packages in finite field arithmetic and graph theory. Using Sage (the code is attached in Appendix A), we find that $\omega(P_{81}^*)=9$ and $\omega(P_{2401}^*)=17$. See also the computational results in \cite[Section 3.4]{NM}. 

We conjecture that $P_{81}^*$ is the only Peisert graph with quartic order, such that the trivial upper bound on the clique number can be attained. A possible explanation that $P_{81}^*$ is exceptional might be the following: in \cite[Lemma 6.7]{WP2}, Peisert showed that $P_{81}^*$ is isomorphic to $G(9^2)$ (defined in \cite[Section 3]{WP2}), which has exceptional automorphism group.

\begin{conj}\label{conjclique}
If $q$ is a power of a prime $p \equiv 3 \pmod 4$  and $q>3$, then the clique number of the Peisert graph of order $q^4$ is strictly less than $q^2$.
\end{conj}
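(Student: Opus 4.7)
The plan is to argue by contradiction: suppose $\omega(P^*_{q^4}) = q^2$ for some prime power $q > 3$ with $q$ a power of a prime $p \equiv 3 \pmod 4$, and then derive a contradiction by combining Theorem \ref{maxP*} with a multiplicative-character argument. As a first step, I would aim for a structural reduction: show that, after applying some automorphism of $P^*_{q^4}$, a maximum clique $C$ of size $q^2$ may be assumed to contain $\F_q$. Peisert graphs are symmetric, so arranging $0 \in C$ is immediate, but promoting this to containment of the full subfield $\F_q$ is subtle; one would try to use the rigid tiling $\F_{q^4} = C \oplus g^2 C$ from Lemma \ref{triv} to force $C$ to be a union of cosets of some $\F_q$-submodule. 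Once such a reduction is in place, $\F_q$ becomes a proper subclique of $C$ and hence is not a maximal clique in $P^*_{q^4}$, so Theorem \ref{maxP*} produces an element $h \in \F_{q^4} \setminus \F_q$ with $h - e \in M_{q^4}$ for every $e \in \F_q$.

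The second step is to rule out such an $h$ when $q > 3$. Encoding $M_{q^4}$ via the multiplicative character $\chi$ of $\F_{q^4}^*$ of order $4$, one has $\mathbf{1}_{M_{q^4}}(x) = \tfrac{1}{2} + \tfrac{1-i}{4}\chi(x) + \tfrac{1+i}{4}\overline{\chi}(x)$ for $x \in \F_{q^4}^*$, and the count of admissible $h$ is
\[
N = \sum_{h \in \F_{q^4} \setminus \F_q} \prod_{e \in \F_q} \mathbf{1}_{M_{q^4}}(h-e).
\]
Expanding the product yields a sum of multiplicative character sums of the shape $\sum_h \prod_e \chi^{a_e}(h-e)$ indexed by functions $a \colon \F_q \to \{0,1,3\}$, with the trivial choice contributing the heuristic main term $(q^4-q)/2^q$. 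As an extra structural input, Theorem \ref{maxP*} forces $\{1,h,g^2,g^2h\}$ to be an $\F_q$-basis of $\F_{q^4}$ for every primitive root $g$, which is a severe multiplicative constraint on $h$ that one would feed back into the character sum.

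The main obstacle is twofold. On the analytic side, even after applying Weil's theorem to the individual character sums, the sheer number of terms ($3^q$) outweighs the savings from cancellation, so Weil-type bounds alone cannot yield $N = 0$; one must locate genuine algebraic cancellation. I would pursue this by combining the basis condition on $\{1,h,g^2,g^2h\}$ with norm and trace identities over the tower $\F_q \subset \F_{q^4}$, hoping to show that the existence of $h$ contradicts an arithmetic identity special to $p \equiv 3 \pmod 4$, in the spirit of the base-$p$ digit calculation exploited via Lucas's theorem in Lemma \ref{cor44}. A complementary line of attack is to adapt the polynomial-method framework of Theorem \ref{t5} directly to the Peisert connection set $M_{q^4}$, which is only a union of two cosets of $(\F_{q^4}^*)^4$ rather than a multiplicative subgroup, thereby obtaining a sub-$q^2$ upper bound on $\omega(P^*_{q^4})$ without going through Theorem \ref{maxP*} at all. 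The exceptional status of $P^*_{81} \cong G(9^2)$ highlighted at the end of the paper is the signal that either approach must genuinely depend on $q > 3$.
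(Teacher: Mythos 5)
The statement you are trying to prove is stated in the paper as Conjecture \ref{conjclique}: the paper offers no proof of it at all, only numerical evidence (SageMath computations showing $\omega(P^*_{81})=9$ and $\omega(P^*_{2401})=17$, and verification of the weaker maximality statement for a handful of small $q$) together with a heuristic explanation of why $q=3$ is exceptional. So there is no argument in the paper to compare yours against, and what you have written is a research plan rather than a proof; as it stands it has genuine gaps that you yourself partly acknowledge.

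Concretely, two steps are missing. First, your structural reduction --- that a maximum clique $C$ of size $q^2$ may, after an automorphism, be assumed to contain the subfield $\F_q$ --- is exactly the unproved bridge between the two conjectures in the paper. Theorem \ref{maxP*} runs in the opposite direction: it says that if $\F_q$ fails to be maximal then a maximum clique of the special form $\F_q \oplus h\F_q$ exists; it does not say that every maximum clique contains (a translate/dilate of) $\F_q$, and nothing in Lemma \ref{triv} forces $C$ to be an $\F_q$-submodule or a union of cosets of one. Without this reduction, even a complete success in your second step would only prove Conjecture \ref{conjmaxc}, which the paper explicitly notes is weaker than Conjecture \ref{conjclique}. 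Second, the character-sum computation cannot close even that weaker statement: the ``main term'' $(q^4-q)/2^q$ is below $1$ for large $q$ while the Weil error terms are of size roughly $q^3$ in aggregate, so no conclusion $N=0$ follows, and the ``genuine algebraic cancellation'' you hope to extract from the basis condition on $\{1,h,g^2,g^2h\}$ and from norm/trace identities is not exhibited. Your fallback --- adapting the polynomial method of Theorem \ref{t5} to the Peisert connection set --- is precisely what the paper reports as not currently workable: $M_{q^4}$ is a union of two cosets of $(\F_{q^4}^*)^4$ rather than a multiplicative subgroup, and the paper states there is no known improvement over the trivial bound $\sqrt{q^4}$ for Peisert graphs. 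Since $\omega(P^*_{81})=9=q^2$ for $q=3$, any genuine proof must also use $q>3$ in an essential, identified way, which your sketch does not yet do.
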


Note that by Theorem \ref{maxP*}, Conjecture \ref{conjclique} implies the following weaker conjecture:

\begin{conj}\label{conjmaxc}
If $q$ is a power of a prime $p \equiv 3 \pmod 4$  and $q>3$, then $\F_{q}$ is a maximal clique in the Peisert graph of order $q^4$.
\end{conj}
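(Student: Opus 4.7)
The plan is to prove Conjecture \ref{conjmaxc} by contradiction. If $\F_q$ is not a maximal clique in $P^*_{q^4}$, there exists $h \in \F_{q^4} \setminus \F_q$ with $h - c \in M_{q^4}$ for every $c \in \F_q$, and Theorem \ref{maxP*} then forces $C := \F_q \oplus h\F_q$ to be a maximum clique of size $q^2$, with $\{1, h, g^2, g^2 h\}$ a basis of $\F_{q^4}$ over $\F_q$ for every primitive root $g$ of $\F_{q^4}$. A preliminary reduction rules out $h \in \F_{q^2}$: otherwise $C = \F_{q^2}$ and $\F_{q^2}^*$ would be contained in $M_{q^4}$, but $\F_{q^2}^* = \langle g^{q^2+1}\rangle$ and $q^2 + 1 \equiv 2 \pmod 4$ (since $q$ is odd) places $g^{q^2+1}$ outside $M_{q^4}$, a contradiction. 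So $h$ generates $\F_{q^4}$ over $\F_q$, and the task reduces to showing that no such $h$ exists when $q > 3$.

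I would attack this in two complementary ways. The character sum approach starts with $1_{M_{q^4}}(y) = \tfrac{1}{2} + \tfrac{1-i}{4}\chi(y) + \tfrac{1+i}{4}\chi^3(y)$ for nonzero $y$, where $\chi$ is the order-$4$ character on $\F_{q^4}^*$ with $\chi(g) = i$. Expanding the product over $c \in \F_q$ writes the count of candidate $h$ as a main term of size $q^4/2^q$ plus a sum of Weil-type character sums against the polynomials $\prod_{c \in \F_q}(h-c)^{f(c)}$ for $f : \F_q \to \{0,1,3\}$. The algebraic approach instead exploits the basis condition, which is equivalent to $g^2 C \cap C = \{0\}$ for every primitive root $g$; this forces the ratio set $\{x/y : x, y \in C \setminus \{0\}\}$ to avoid every element of $\F_{q^4}^*$ of order exactly $(q^4-1)/2$, a very rigid multiplicative constraint on the $2$-dimensional $\F_q$-subspace $C$ whose nonzero elements already lie in $M_{q^4}$.

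The main obstacle is that off-the-shelf Weil bounds are too weak here: each error character sum is bounded generically by $(3q-1)q^2$, dwarfing the main term $q^4/2^q$. A successful argument must exploit the exceptional structure of the polynomials in play, most notably the identity $\prod_{c \in \F_q}(h-c) = h^q - h$, which is $\F_q$-linear (Artin--Schreier), together with the algebraic rigidity supplied by Theorem \ref{maxP*}. The exceptional case $q = 3$, where $\omega(P^*_{81}) = 9$ attains the trivial bound and $P^*_{81} \cong G(9^2)$ by \cite[Lemma 6.7]{WP2}, shows that the argument must be tight at the bottom of the parameter range; this case will almost certainly need to be isolated by direct computation.
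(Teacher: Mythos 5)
The statement you are trying to prove is not a theorem of the paper: it is stated as Conjecture \ref{conjmaxc}, and the paper offers no proof. What the paper does provide is exactly the reduction you begin with: by Theorem \ref{maxP*}, if $\F_q$ were not a maximal clique in $P^*_{q^4}$ then $\omega(P^*_{q^4})=q^2$ and a clique of the form $\F_q\oplus h\F_q$ with the stated basis property would exist, so Conjecture \ref{conjmaxc} follows from the stronger Conjecture \ref{conjclique} that the trivial bound $q^2$ is never attained for $q>3$; beyond that, the paper only verifies the statement computationally for $q\in\{7,9,11,19,23,27,31\}$ and notes the genuine exception $q=3$, where $\omega(P^*_{81})=9$. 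So there is no paper proof for your attempt to be measured against, and your proposal, by your own account, does not close the gap either.

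Concretely, the parts of your plan that are solid are the reduction via Theorem \ref{maxP*} (the same observation the paper makes) and the exclusion of $h\in\F_{q^2}$: since $\F_{q^2}^*=\langle g^{q^2+1}\rangle$ and $q^2+1\equiv 2\pmod 4$, the subfield $\F_{q^2}$ is not even a clique, so any offending $h$ must generate $\F_{q^4}$ over $\F_q$. The missing idea is everything after that. The character-sum route cannot work in the form you describe: the ``main term'' $q^4/2^q$ is exponentially small in $q$ (already below $1$ for $q\geq 17$), so no error bound of Weil type, however cleverly the polynomials $\prod_{c\in\F_q}(h-c)^{f(c)}$ are exploited, can isolate it; this is the same square-root barrier that blocks all known attacks on clique numbers of Paley-type graphs, and the Artin--Schreier identity $\prod_{c\in\F_q}(h-c)=h^q-h$ does not by itself change the shape of the bound. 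The ``algebraic rigidity'' route (that $g^2C\cap C=\{0\}$ for every primitive root $g$) is a restatement of the equality case of Lemma \ref{triv}, not an argument; the paper points out that even improving the trivial bound $\sqrt{q}$ for Peisert graphs is open, precisely because the connection set $M_q$ is not closed under multiplication and the polynomial methods for $GP(q,4)$ (Theorem \ref{t5}, Theorem \ref{t7}, Lemma \ref{cor44}) do not transfer. In short: your reduction matches the paper's, but the heart of the statement remains an open problem, and your proposal correctly identifies the obstacles without overcoming them.
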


It is easy to design a polynomial-time algorithm to check whether $\F_{q}$ is a maximal clique in the Peisert graph of order $q^4$.  Using Sage \cite{Sage} (the code is attached in Appendix A), we verify that $\F_3$ is not a maximal clique in $P_{81}^*$, which implies that $\omega(P_{81}^*)=9$ by Theorem \ref{maxP*} and is consistent with the numerical evidence. We also verify that for $q \in \{7,9,11,19,23,27,31\}$, $\F_{q}$ is a maximal clique in the Peisert graph of order $q^4$. 

\section*{Acknowledgement}
The author would like to thank Joshua Zahl for valuable suggestions, and  Greg Martin, J\'ozsef Solymosi, and Ethan White for helpful discussions. The author also would like to thank the anonymous referees for a careful reading of the draft.

\appendix
\section{Sage code}

Sage code for finding the clique number of a Peisert graph (for example, of order 81):

\begin{lstlisting}
def Peisert(q):
    K.<a> = GF(q, modulus="primitive")
    pows=[a^(4*i+j) for i in [0..(q-1)/4-1] for j in [0..1]]
    return Graph([K, lambda i,j: i != j and i-j in pows])
X = Peisert(81)
X.clique_number()
\end{lstlisting}

Sage code for checking whether $\F_q$ is a maximal clique in the Peisert graph with order $q^4$, where $q$ is a power of a prime $p \equiv 3 \pmod 4$ (for example, $q=23$):

\begin{lstlisting}
q=23
K.<g> = GF(q^4, modulus="primitive")
m=(q^4-1)/4-1
pows=[g^(4*i+j) for i in [0..m] for j in [0..1]]
count=0
t=(q^4-1)/(q-1)
gg=g^t
for i in [0..m]:
    for j in [0..1]:
        s=0
        h=g^(4*i+j)
        for k in [1..q-1]:
           if (gg^k-h) in pows: 
            s=s+1
           else:
            break
        if s<q-1:
            count=count+1
if (count==(q^4-1)/2):
    print("maximal")
else:
    print("not maximal")
\end{lstlisting}

\end{document}